\documentclass[11pt,reqno]{amsart}
\usepackage{indentfirst, amssymb,amsmath,amsthm, mathrsfs,cite,graphicx,float}
\newtheorem*{theoA}{Theorem A}
\newtheorem*{theoB}{Theorem B}
\newtheorem*{theoC}{Theorem C}
\newtheorem*{theoD}{Theorem D}

\newtheorem{theo}{Theorem}[section]
\newtheorem{lem}{Lemma}[section]

\newtheorem{ques}{Question}[section]

\newtheorem{open problem}{Open problem}[section]
\newcommand{\pa}{\partial}
\newcommand{\ol}{\overline}
\newcommand{\be}{\begin{equation}}
\newcommand{\ee}{\end{equation}}
\newcommand{\bs}{\begin{small}}
\newcommand{\es}{\end{small}}
\newcommand{\beas}{\begin{eqnarray*}}
\newcommand{\eeas}{\end{eqnarray*}}
\newcommand{\bea}{\begin{eqnarray}}
\newcommand{\eea}{\end{eqnarray}}
\renewcommand{\epsilon}{\varepsilon}
\numberwithin{equation}{section}

\begin{document}
\title[Bohr phenomenon in complex Banach spaces]{Bohr phenomenon for certain integral operators and transforms in complex Banach spaces}
\author[V. Allu, R. Biswas, R. Mandal and H. Yanagihara ]{ Vasudevarao Allu, Raju Biswas, Rajib Mandal and Hiroshi Yanagihara}
\date{}
\address{Vasudevarao Allu, Department of Mathematics, Indian Institute of Technology Bhubaneswar, School of Basic Science, Bhubaneswar-752050, Odisha, India.}
\email{avrao@iitbbs.ac.in}
\address{Raju Biswas, Department of Mathematics, Raiganj University, Raiganj, West Bengal-733134, India.}
\email{rajubiswasjanu02@gmail.com}
\address{Rajib Mandal, Department of Mathematics, Raiganj University, Raiganj, West Bengal-733134, India.}
\email{rajibmathresearch@gmail.com}
\address{Hiroshi Yanagihara, Department of Applied Science, Faculty of Engineering, Yamaguchi University, Tokiwadai, Ube, 755-8611, Japan}
 \email{hiroshi@yamaguchi-u.ac.jp}
\let\thefootnote\relax
\footnotetext{2020 Mathematics Subject Classification: 32A05, 32A10, 32K05, 32M15.}
\footnotetext{Key words and phrases: Bohr radius, holomorphic function, Homogeneous polynomial expansion, Ces\'aro operator, Bernardi integral operator, discrete Fourier transform, $\beta$-Ces\'aro operator.}
\begin{abstract}
In this paper, we investigate several Bohr radii associated with the Ces\'aro operator, Bernardi integral operator, $\beta$-Ces\'aro operator, and discrete Fourier transform, all defined on a set of holomorphic mappings from the unit ball of a complex Banach space into the closure of the unit polydisc $\mathbb{D}^n$ within the space $\mathbb{C}^n$.
\end{abstract}
\maketitle
\section{Introduction and Preliminaries}
\noindent Let $f(z)=\sum_{n=0}^{\infty} a_nz^n$ be a holomorphic mapping in the unit disk $\mathbb{D}:=\{z\in\mathbb{C}:|z|<1\}$. The classical Bohr's theorem states that if 
$|f(z)|\leq 1$ in $\Bbb{D}$, then 
\be\label{e12}\sum_{n=0}^{\infty}| a_n| r^n\leq 1\quad\text{for all}\quad |z|=r\leq 1/3.\ee
The number $1/3$ cannot be improved and is known as the Bohr radius (see \cite{B1914}). The inequality (\ref{e12}) is known as the classical Bohr inequality. 
In fact, Bohr's \cite{B1914} original proof of the inequality (\ref{e12}) was limited to the case when $r$ was no greater than $1/6$. Subsequently, Wiener, Schur, and Riesz (see \cite{S1927, T1962}) independently proved that the inequality (\ref{e12}) holds for $r\le 1/3$.
Bohr's and Wiener's proofs can be found in \cite{B1914}. \\[2mm]
\indent In the recent years, there has been a significant increase in research studies concerning Bohr-type theorems in various abstract settings, including refinements, ramifications, and extensions. For a detailed study on the Bohr radius in one complex variable, we refer to \cite{ABM2025,KKP2021, AAH2022,AKP2019,1AH2021,2AH2021,AA2023,BB2004,1KP2018,LP2023,MBG2024,ABM2024, BM2024,PVW2020,K2022, PW2020, AH2022,1AH2022, KP2017} and the references therein.
A natural question arises: ``Is it possible to prove the Bohr-type inequality for certain complex integral operators and integral transforms defined on different function spaces?''
The idea was initially proposed in the context of the unit disk $\mathbb{D}$ for the classical Ces\'aro operator in \cite{KKP2020,KKP2021,KKP2022} and for the Bernardi integral operator in \cite{KS2021}. The Ces\'aro operator is studied in \cite{HL1932} and is defined as follows:
\bea\label{e14} \mathcal{C}f(z):=\sum_{n=0}^\infty \frac{1}{n+1}\left(\sum_{k=0}^n a_k\right) z^n=\int_{0}^1\frac{f(t z)}{1-t z}\; d t\eea
for a holomorphic function $f(z)=\sum_{n=0}^\infty a_nz^n$ in the unit disk $\mathbb{D}$.
Evidently,
\beas \vert\mathcal{C}f(z)\vert=\left\vert\sum_{n=0}^\infty \frac{1}{n+1}\left(\sum_{k=0}^n a_k\right) z^n\right\vert\leq \frac{1}{r}\log\frac{1}{1-r}\quad\text{for each}\quad\vert z\vert=r<1.\eeas 
For the Ces\'aro operator, Kayumov {\it et al.}\cite{KKP2020} have established the following Bohr-type inequality.
\begin{theoA}\cite[Theorem 1, p. 616]{KKP2020} Let $f(z)=\sum_{n=0}^{\infty} a_nz^n$ be holomorphic in $\mathbb{D}$ and $|f(z)|\leq 1$ in $\mathbb{D}$, then 
\beas \mathcal{C}_f(r)=\sum_{n=0}^\infty \frac{1}{n+1}\left(\sum_{k=0}^n |a_k|\right) r^n\leq  \frac{1}{r}\log\frac{1}{1-r}\quad\text{for}\quad\vert z\vert=r\leq R_1,\eeas
where $R_1(\approx0.5335)$ is the positive root of the equation $2x=3(1-x)\log (1/(1-x))$. The number $R_1$ cannot be improved.\end{theoA}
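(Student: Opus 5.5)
The plan is to split $\mathcal{C}_f(r)$ into the constant coefficient and the rest, and to bound the rest with the classical coefficient estimate $|a_k|\le 1-|a_0|^2$ for $k\ge 1$. This estimate holds for every holomorphic self-map of $\mathbb{D}$ (Wiener's inequality). Write $a=|a_0|\in[0,1]$. Then
\[
\mathcal{C}_f(r)=\sum_{n=0}^\infty \frac{r^n}{n+1}\,|a_0| +\sum_{n=1}^\infty\frac{r^n}{n+1}\sum_{k=1}^n|a_k|
\le a\,\frac{1}{r}\log\frac{1}{1-r}+(1-a^2)\sum_{n=1}^\infty \frac{n\,r^n}{n+1}.
\]
The last series is explicit:
\[
\sum_{n\ge1}\frac{n r^n}{n+1}=\sum_{n\ge1}r^n-\sum_{n\ge1}\frac{r^n}{n+1}=\frac{r}{1-r}-\Big(\frac1r\log\frac1{1-r}-1\Big)=\frac{1}{1-r}-\frac{1}{r}\log\frac{1}{1-r}.
\]
Set $L(r)=\frac1r\log\frac1{1-r}$. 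The bound becomes $\Phi(a)=aL+(1-a^2)\big(\frac1{1-r}-L\big)$, and we need $\Phi(a)\le L$ for all $a\in[0,1]$. This is equivalent to $(1-a^2)\big(\frac{1}{1-r}-L\big)\le (1-a)L$. Dividing by $1-a$ (the case $a=1$ is trivial, since then $f$ is constant and equality holds), the condition becomes $(1+a)\big(\frac1{1-r}-L\big)\le L$. The left side is largest as $a\to1$, so the inequality holds for all $a$ exactly when $\frac{2}{1-r}\le 3L$, i.e. $2r\le 3(1-r)\log\frac1{1-r}$. The function $3(1-r)\log\frac{1}{1-r}-2r$ equals $r-\cdots>0$ near $0$ and is concave, so it is nonnegative precisely on $[0,R_1]$. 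This proves the inequality for $r\le R_1$.

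For sharpness I would use the disk automorphisms $f_a(z)=\frac{a-z}{1-az}$ with $a\in(0,1)$. Here $a_0=a$ and $a_k=-(1-a^2)a^{k-1}$ for $k\ge1$, so $|a_k|=(1-a^2)a^{k-1}$. Then $\mathcal{C}_{f_a}(r)=aL+(1-a^2)\sum_{n\ge1}\frac{r^n}{n+1}\frac{1-a^n}{1-a}$. After subtracting $L$ and dividing by $1-a$, the quantity becomes $-L+(1+a)\sum_{n\ge1}\frac{r^n(1-a^n)}{(n+1)(1-a)}$. As $a\to1^-$ this tends to $-L+2\sum_{n\ge1}\frac{n r^n}{n+1}=\frac{2}{1-r}-3L$, which is positive for $r>R_1$. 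Hence for each $r>R_1$ there is $a$ close to $1$ with $\mathcal{C}_{f_a}(r)>L(r)$, so $R_1$ cannot be improved.

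The main obstacle is the sharpness step, specifically justifying the limit $a\to1^-$. This needs $\frac{1-a^n}{1-a}\uparrow n$ together with monotone convergence of the series for fixed $r<1$, which is routine. A secondary point is confirming that the root $R_1\approx0.5335$ is the unique positive root in $(0,1)$. Concavity of $3(1-x)\log\frac1{1-x}-2x$, whose second derivative is $-3/(1-x)<0$, combined with its value $0$ at $x=0$ and positive slope $1$ there, settles this.
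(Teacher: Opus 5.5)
Your proof is correct and is essentially the argument the paper gives for Theorem~\ref{Th1}, which reduces to Theorem A when $X=\mathbb{C}$ and $n=1$. In both, the bound $|a_k|\le 1-|a_0|^2$ turns $\mathcal{C}_f(r)$ into a quadratic in $x=|a_0|$ whose supremum on $[0,1]$ equals $\frac{1}{r}\log\frac{1}{1-r}$ exactly when $3(1-r)\log\frac{1}{1-r}\ge 2r$, and sharpness comes from $\frac{a-z}{1-az}$ with $a\to 1^-$. Factoring out $1-a$ instead of differentiating in $x$, and using monotone convergence instead of an $O((1-\lambda)^2)$ remainder, are only cosmetic differences; your concavity argument for the uniqueness of $R_1$ is cleaner than the paper's appeal to a graph.
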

For a holomorphic function $f(z)=\sum_{n=m}^\infty a_n z^n$ in the unit disk $\mathbb{D}$, the Bernardi integral operator (see \cite[p. 11] {MM2000}) is defined as follows:
\bea\label{e16} \mathcal{L}_\beta f(z):=\frac{1+\beta}{z^\beta}\int_0^z f(\eta) \eta^{\beta-1} d \eta=(1+\beta)\sum_{n=m}^\infty \frac{a_n}{\beta+n} z^n, \eea
where $\beta>-m$ and $m\geq 0$ is an integer.
In 2021, Kumar and Sahoo \cite{KS2021} studied the following Bohr-type inequality for the Bernardi integral operator.
\begin{theoB}\cite{KS2021} Let $\beta>-m$. If $f(z)=\sum_{n=m}^\infty a_n z^n$ is holomorphic in $\mathbb{D}$ and $|f(z)|\leq 1$ in $\mathbb{D}$, then 
\beas \sum_{n=m}^\infty \frac{|a_n|}{\beta+n} r^n\leq \frac{r^m}{m+ \beta}\quad\text{for}\quad |z|=r\leq R_2(\beta),\eeas
where $R_2(\beta)$ is the positive root of the equation 
\beas\frac{x^m}{m+\beta}-2\sum_{n=m+1}^\infty \frac{x^n}{n+\beta}=0.\eeas 
The number $R_2(\beta)$ cannot be improved.\end{theoB}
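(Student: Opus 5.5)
We prove Theorem B by combining the coefficient bound for bounded holomorphic functions with a one-variable monotonicity argument in $r$, and then exhibit an extremal family to show sharpness.

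\emph{Step 1: the coefficient estimate.} Write $a=|a_m|\in[0,1]$. We use the Wiener-type inequality $|a_n|\le 1-|a_m|^2$ for all $n\ge m+1$. To obtain it, note that $g(z)=f(z)/z^m=\sum_{k\ge 0}a_{m+k}z^k$ is holomorphic in $\mathbb{D}$ with $|g|\le 1$ by the maximum principle. The classical bound $|b_k|\le 1-|b_0|^2$ for the Taylor coefficients of a holomorphic self-map of $\mathbb{D}$ then gives the claim. We treat $\beta+n>0$ for all $n\ge m$, which holds since $\beta>-m$. Consequently
\beas
\sum_{n=m}^\infty \frac{|a_n|}{\beta+n}r^n\le \frac{a\,r^m}{m+\beta}+(1-a^2)\sum_{n=m+1}^\infty\frac{r^n}{n+\beta}.
\eeas

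\emph{Step 2: reduction to a quadratic in $a$.} Set $A=r^m/(m+\beta)$ and $B=\sum_{n\ge m+1}r^n/(n+\beta)$. It suffices to show $aA+(1-a^2)B\le A$ for $a\in[0,1]$, that is,
\beas
(1-a)\bigl(B(1+a)-A\bigr)\le 0 .
\eeas
This holds for all $a\in[0,1]$ whenever $2B\le A$. The quotient $B/A=(m+\beta)\sum_{n\ge m+1}r^{n-m}/(n+\beta)$ is a power series in $r$ with positive coefficients, so it is strictly increasing from $0$ to $\infty$. Hence $A-2B=0$ has a unique positive root $R_2(\beta)<1$, and $2B\le A$ holds exactly for $r\le R_2(\beta)$.

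\emph{Step 3: sharpness.} We expect this to be the main obstacle, since it requires a careful asymptotic expansion near $a=1$. Take
\beas
f_a(z)=z^m\,\frac{a-z}{1-az},\qquad a\in(0,1),
\eeas
whose coefficients are $a_m=a$ and $a_n=-(1-a^2)a^{n-m-1}$ for $n\ge m+1$. The left-hand side of the inequality then equals
\beas
aA+(1-a^2)\sum_{n\ge m+1}\frac{a^{n-m-1}r^n}{n+\beta}.
\eeas
Subtract $A$ and divide by $1-a$. Letting $a\to1^-$, the resulting quotient tends to $2B-A$; the limit may be taken inside the sum by monotone convergence, since $r<1$. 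For $r>R_2(\beta)$ this limit is positive, so the inequality fails for $a$ close to $1$. Therefore $R_2(\beta)$ cannot be improved.
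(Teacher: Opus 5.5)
Your proof is correct and follows essentially the same argument the paper gives for its Banach-space analogue, Theorem~\ref{Th2} (Theorem B itself is only cited): the Wiener bound applied to $f/z^m$, reduction to $aA+(1-a^2)B\le A$, and the extremal family $z^m(a-z)/(1-az)$ with $a\to1^-$. The differences are cosmetic. You factor the difference as $(1-a)\bigl((1+a)B-A\bigr)$ where the paper differentiates twice in $x$, and you take the limit of the difference quotient by monotone convergence where the paper writes an $O((1-\lambda)^2)$ remainder; your coefficients $-(1-a^2)a^{n-m-1}$ are also the correct ones, whereas the paper writes $\lambda^{s-1}$.
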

\indent Let $\left\{x_n\right\}_{n=0}^{N-1}$ be a sequence of complex numbers. The discrete Fourier transform (see \cite[Chapter 6]{BN2000}) is defined by 
\beas \mathcal{F}(x_n)=\sum_{n=0}^{N-1} x_n e^{-2\pi i n k/N}. \eeas 
For a holomorphic function $f(z)=\sum_{n=0}^\infty a_n z^n$ in $\mathbb{D}$ with $|f(z)|\leq 1$ in $\mathbb{D}$, we perform the discrete
Fourier transform on the coefficients $a_k$ from $k=0$ to $n$ which gives
\beas \mathcal{F}[f](z)=\sum_{n=0}^\infty \left(\sum_{k=0}^n a_k e^{-2\pi i n k/(n+1)}\right) z^n.\eeas
To obtain the Bohr-type inequality, we denote the majorant series of $\mathcal{F}[f](z)$ as
\bea\label{e15} \mathcal{F}_f(r):=\sum_{n=0}^\infty \left(\sum_{k=0}^n |a_k|\right) r^n, \quad\text{where}\quad |z|=r<1.\eea
For the discrete Fourier transform, Ong and Ng \cite{ON2024} have established the following Bohr-type inequality.
\begin{theoC}\cite{ON2024}
If $f(z)=\sum_{n=0}^\infty a_n z^n$ is holomorphic in $\mathbb{D}$ and $|f(z)|\leq 1$ in $\mathbb{D}$, then 
\beas \mathcal{F}_f(r)\leq \frac{1}{1-r}\quad\text{for}\quad r\leq \frac{1}{3}.\eeas
The constant $1/3$ cannot be improved.
\end{theoC}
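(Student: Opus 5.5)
The statement to prove is Theorem C: if $|f|\le 1$ in $\mathbb{D}$, then $\mathcal{F}_f(r)\le 1/(1-r)$ for $r\le 1/3$, and $1/3$ is sharp. The plan is to rewrite $\mathcal{F}_f$ so that the classical Bohr inequality (\ref{e12}) applies directly. Interchanging the order of summation in (\ref{e15}) gives
\beas \mathcal{F}_f(r)=\sum_{n=0}^\infty\sum_{k=0}^n |a_k| r^n=\sum_{k=0}^\infty |a_k|\sum_{n=k}^\infty r^n=\frac{1}{1-r}\sum_{k=0}^\infty |a_k| r^k .\eeas
This is justified because every term is nonnegative and $\sum |a_k| r^k$ converges for $r<1$, since $|a_k|\le 1$ by Cauchy's estimates.

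\textbf{Inequality.} For $r\le 1/3$, the classical Bohr inequality (\ref{e12}) gives $\sum_{k=0}^\infty |a_k|r^k\le 1$. Hence $\mathcal{F}_f(r)\le 1/(1-r)$.

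\textbf{Sharpness.} We show that for every $r>1/3$ the inequality fails for some admissible $f$. By the identity above, it suffices to find $f$ with $\sum_k |a_k| r^k>1$. Take the standard extremal functions
\beas f_a(z)=\frac{a-z}{1-az},\quad a\in(0,1),\eeas
which satisfy $|f_a|\le 1$ in $\mathbb{D}$. Their coefficients are $a_0=a$ and $a_k=-(1-a^2)a^{k-1}$ for $k\ge 1$, so
\beas \sum_{k=0}^\infty |a_k| r^k=a+\frac{(1-a^2)r}{1-ar}.\eeas
A short computation shows
\beas a+\frac{(1-a^2)r}{1-ar}-1=\frac{(1-a)\bigl(2r-1+a(1-r)\bigr)}{1-ar}\quad\text{(up to rearrangement).}\eeas
For fixed $r>1/3$, letting $a\to 1^-$ makes the factor $2r-1+a(1-r)$ tend to $3r-1>0$. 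Hence the expression exceeds $1$ for $a$ close to $1$, and so $\mathcal{F}_{f_a}(r)>1/(1-r)$. This shows $1/3$ cannot be improved.

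The only step needing care is the sharpness computation, specifically checking the sign of the factored expression. Everything else reduces immediately to (\ref{e12}).
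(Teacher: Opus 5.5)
Your proof is correct in substance and takes a shorter route than the paper. The paper does not prove Theorem C itself; it quotes it from Ong and Ng. Its own argument is for the vector-valued analogue, Theorem~\ref{Th3}. There it bounds every coefficient by Lemma~\ref{lem}, $|a_k|\le 1-|a_0|^2$, arrives at $G_4(x,r)=\frac{x}{1-r}+(1-x^2)\frac{r}{(1-r)^2}$ with $x=|a_0|$, and shows $G_4$ is increasing in $x$ for $r\le 1/3$.

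Your interchange of summation, $\mathcal{F}_f(r)=\frac{1}{1-r}\sum_{k\ge 0}|a_k|r^k$, shows that Theorem C is literally equivalent to the classical Bohr theorem, for both the inequality and its sharpness. Indeed $G_4(x,r)=\frac{1}{1-r}\bigl(x+(1-x^2)\frac{r}{1-r}\bigr)$ is just $\frac{1}{1-r}$ times the standard bound behind (\ref{e12}), so the paper is implicitly re-proving Bohr's inequality.

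Your route buys brevity and makes sharpness automatic. Since (\ref{e12}) is sharp at $1/3$, for every $r>1/3$ some admissible $f$ has $\sum|a_k|r^k>1$, hence $\mathcal{F}_f(r)>\frac{1}{1-r}$, with no extremal computation needed. The paper's route is self-contained and is phrased to carry over to $F:B_X\to\ol{\mathbb{D}^n}$ under $|a_j|=\Vert a\Vert_\infty$. Your interchange works there too, reducing Theorem~\ref{Th3} to a Bohr inequality for $\Vert a\Vert_\infty+\sum_{s\ge 1}\Vert P_s(z)\Vert_\infty$.

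The one thing that is actually wrong is the displayed factorization in your sharpness step. The correct identity is
\[
a+\frac{(1-a^2)r}{1-ar}-1=\frac{(1-a)\bigl((1+2a)r-1\bigr)}{1-ar},
\]
not $(1-a)\bigl(2r-1+a(1-r)\bigr)/(1-ar)$. Your factor $2r-1+a(1-r)$ tends to $r$, not $3r-1$, as $a\to 1^-$. Taken literally, it would give $\sum|a_k|r^k>1$ for every $r>0$, contradicting the inequality you had just proved.

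With the correct factor, $(1+2a)r-1\to 3r-1>0$ for $r>1/3$, so your conclusion stands. Alternatively, simply invoke the sharpness of (\ref{e12}) via your identity. Once corrected, this exact closed form is cleaner than the paper's decomposition (\ref{f3}). There the remainder $G_5(\lambda,r)$ is of the same order $1-\lambda$ as the main term, and it is harmless only because it is positive for $\lambda$ near $1$ when $r>1/3$.
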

 Motivated by \textrm{Theorem A}, Kumar and Sahoo \cite{KS2021} have considered the $\beta$-Ces\'aro operator ($\beta>0$), which is defined as
 \be\label{g6} \mathcal{C}_{\beta}^f(z):=\sum_{n=0}^\infty \frac{1}{n+1}\left(\sum_{k=0}^n\frac{\Gamma(n-k+\beta)}{\Gamma(n-k+1)\Gamma(\beta)}a_k\right) z^n=\int_{0}^1\frac{f(t z)}{(1-t z)^{\beta}} d t=\frac{1}{z}\int_{0}^z\frac{f(t)}{(1-t)^{\beta}} d t\ee
for a holomorphic function $f(z)=\sum_{n=0}^\infty a_nz^n$ in the unit disk $\mathbb{D}$. 
Indeed, Kumar and Sahoo \cite{KS2021} have established the following Bohr-type inequality for the $\beta$-Ces\'aro operator.
\begin{theoD}\cite{KS2021}
Let $0<\beta (\not=1)$. If $f(z)=\sum_{n=0}^\infty a_n z^n$ is holomorphic in $\mathbb{D}$ and $|f(z)|\leq 1$ in $\mathbb{D}$, then 
\beas\sum_{n=0}^\infty \frac{1}{n+1}\left(\sum_{k=0}^n\frac{\Gamma(n-k+\beta)}{\Gamma(n-k+1)\Gamma(\beta)}|a_k|\right) r^n\leq \frac{1}{r}\left(\frac{1-(1-r)^{1-\beta}}{1-\beta}\right)\eeas
for $|z|=r\leq R_3(\beta)$, where $R_3(\beta)$ is the positive root of the equation 
\beas \frac{3\left(1-(1-r)^{1-\beta}\right)}{1-\beta}-\frac{2\left((1-r)^{-\beta}-1\right)}{\beta}=0.\eeas
The number $R_3(\beta)$ cannot be improved.
\end{theoD}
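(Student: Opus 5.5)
The statement is Theorem D, the Bohr-type inequality for the $\beta$-Ces\'aro operator. I would prove it with the standard Wiener-type coefficient bound. If $|f|\le 1$ in $\mathbb{D}$, then $|a_k|\le 1-|a_0|^2$ for $k\ge 1$. Write $a=|a_0|\in[0,1]$ and $c_j=\Gamma(j+\beta)/(\Gamma(j+1)\Gamma(\beta))$, so that $\sum_{j\ge0}c_j x^j=(1-x)^{-\beta}$. Interchanging the order of summation, the left-hand side becomes
\beas \sum_{k=0}^\infty |a_k|\sum_{n\ge k}\frac{c_{n-k}}{n+1}r^n=\sum_{k=0}^\infty |a_k|\int_0^1\frac{(tr)^k}{(1-tr)^\beta}\,dt .\eeas
The $k=0$ term is $a\,\Phi(r)$ with $\Phi(r):=\int_0^1(1-tr)^{-\beta}dt=\frac{1}{r}\frac{1-(1-r)^{1-\beta}}{1-\beta}$, which is exactly the right-hand side. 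For the tail, bound $|a_k|\le 1-a^2$ and sum the geometric series:
\beas \sum_{k\ge1}\int_0^1\frac{(tr)^k}{(1-tr)^\beta}dt=\int_0^1\frac{tr}{(1-tr)^{1+\beta}}dt=\Psi(r).\eeas
A direct integration gives $\Psi(r)=\frac{1}{r}\big[\frac{(1-r)^{-\beta}-1}{\beta}-\frac{1-(1-r)^{1-\beta}}{1-\beta}\big]$.

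The left-hand side is therefore at most $a\Phi(r)+(1-a^2)\Psi(r)$, and we need this to be $\le\Phi(r)$. Since $1-a^2=(1-a)(1+a)$, after dividing by $1-a$ (the case $a=1$ is trivial, because then $f$ is constant) the condition reads $(1+a)\Psi(r)\le\Phi(r)$. This holds for every $a\in[0,1)$ exactly when $2\Psi(r)\le\Phi(r)$. Substituting the formulas, this is
\beas \frac{3\left(1-(1-r)^{1-\beta}\right)}{1-\beta}-\frac{2\left((1-r)^{-\beta}-1\right)}{\beta}\ge 0 .\eeas
Let $G(r)$ denote the left-hand side. We have $G(0)=0$, and the expansion near $0$ gives $G(r)\approx 3r-2r>0$. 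Moreover $G'(r)=3(1-r)^{-\beta}-2(1-r)^{-\beta-1}$, which is positive only while $1-r>2/3$ and negative afterwards. So $G$ first increases and then decreases to $-\infty$ as $r\to1$. Hence $G$ has a unique positive root $R_3(\beta)$, and $G\ge0$ exactly on $[0,R_3(\beta)]$. This proves the inequality.

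For sharpness, take $f(z)=\frac{a-z}{1-az}$, for which $|a_0|=a$ and $|a_k|=(1-a^2)a^{k-1}$. The left-hand side then equals
\beas a\Phi(r)+(1-a^2)\int_0^1\frac{tr}{(1-tr)^\beta(1-atr)}dt .\eeas
Subtracting $\Phi(r)$ and dividing by $1-a$, the sign as $a\to1^-$ is governed by $-\Phi(r)+2\Psi(r)$ plus a term that tends to $0$. For $r>R_3(\beta)$ we have $2\Psi(r)>\Phi(r)$, so the inequality fails for $a$ close to $1$.

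The main obstacle is this limiting step: one must justify that the integral with $1-atr$ converges to $\Psi(r)$ as $a\to1^-$, which I would do by monotone convergence. The other delicate point is the sign analysis of $G$ for all $\beta>0$, $\beta\neq1$, which the derivative computation above handles.
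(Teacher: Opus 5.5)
Your proof is correct and follows essentially the same route as the paper's proof of Theorem \ref{Th4}, which reduces to Theorem D when $X=\mathbb{C}$ and $n=1$: the Wiener bound $|a_k|\le 1-|a_0|^2$, reduction to $x\Phi(r)+(1-x^2)\Psi(r)$ with the threshold given by the sign of $\Phi(r)-2\Psi(r)$, and sharpness via $(\lambda-z)/(1-\lambda z)$ with $\lambda\to1^-$. The differences are cosmetic: you factor out $1-a$ where the paper uses concavity in $x$, and you also check uniqueness of the root via $G'(r)=(1-r)^{-\beta-1}(1-3r)$, which the paper takes for granted.
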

\noindent In particular, if we put $f(t)\equiv 1$, $f(t)\equiv \sum_{n=1}^{\infty}t^n$ and $f(t)\equiv \sum_{n=0}^{\infty}t^n$ in (\ref{g6}), then we obtain, respectively, the following relations:
\bea\label{ceq}&& \sum_{n=0}^\infty \frac{1}{n+1}\left(\frac{\Gamma(n+\beta)}{\Gamma(n+1)\Gamma(\beta)}\right) z^n=\frac{1}{z}\int_{0}^z\frac{1}{(1-t)^{\beta}} d t,\\[2mm]
\label{ceq1}&&\sum_{n=1}^\infty \frac{1}{n+1}\left(\sum_{k=1}^n\frac{\Gamma(n-k+\beta)}{\Gamma(n-k+1)\Gamma(\beta)}\right) z^n=\frac{1}{z}\int_{0}^z\frac{t}{(1-t)^{\beta+1}} d t\\[2mm]\text{and}
\label{ceq2}&&\sum_{n=0}^\infty \frac{1}{n+1}\left(\sum_{k=0}^n\frac{\Gamma(n-k+\beta)}{\Gamma(n-k+1)\Gamma(\beta)}\right) z^n=\frac{1}{z}\int_{0}^z\frac{1}{(1-t)^{\beta+1}} d t.\eea
From (\ref{ceq}) and (\ref{ceq2}), we obtain the following identity
\bea \label{ceq3}\frac{\Gamma(n+\beta+1)}{\Gamma(n+1)\Gamma(\beta+1)}\equiv \sum_{k=0}^n\frac{\Gamma(n-k+\beta)}{\Gamma(n-k+1)\Gamma(\beta)}.\eea
Before proceeding with the discussion, it is essential to introduce the requisite notations. 
For $z=(z_1,z_2,\cdots,z_n)\in\mathbb{C}^n$, let $\Vert z\Vert_\infty=\max\limits_{j=1,2,\cdots,n}|z_j|$ denote the maximum norm.
A ploydisc in $\mathbb{C}^n$ is the product of discs. The open polydisc with center $\alpha=(\alpha_1,\alpha_2,\ldots, \alpha_n)$ and poly-radius $r=(r_1, r_2,\ldots, r_n)$ will be denoted by $\mathbb{D}^n(\alpha; r)$ and defined as  
\beas \mathbb{D}^n(\alpha; r)=\left\{z=(z_1,z_2,\cdots,z_n)\in\mathbb{C}^n: |z_j-\alpha_j|< r_j\quad\text{for}\quad j=1,2,\ldots, n\right\}.\eeas
Let $\alpha=0=(0, 0,\ldots, 0)$ and $r=1=(1,1,\ldots,1)$. Then, $\mathbb{D}^n(0; 1)\equiv \mathbb{D}^n$ denote the unit polydisc in $\mathbb{C}^n$. The set 
\beas \pa_0 \mathbb{D}^n(\alpha; r):=\left\{z=(z_1,z_2,\cdots,z_n)\in\mathbb{C}^n: |z_j-\alpha_j|=r_j\quad\text{for}\quad j=1,2,\ldots, n\right\}\eeas
is contained within the boundary $\pa\mathbb{D}^n(\alpha; r)$ of $\mathbb{D}^n(\alpha; r)$ and it is called the distinguished boundary of $\pa \mathbb{D}^n(\alpha; r)$.\\[2mm]
\indent For a comprehensive overview of Bohr's phenomenon of holomorphic or pluriharmonic mappings with values in higher-dimensional complex Banach spaces using the homogeneous polynomial expansions, see \cite{HH2024, 1HH2024, KPW2025, A2000, LW2007, HHK2009, HHK2025}. Kayumov and Ponnusamy \cite{KP2018} have obtained the Bohr radius for the 
 holomorphic functions with lacunary series in the unit disc $\mathbb{D}$. 
 In their study, Liu and Liu \cite{LL2020} made significant contributions to the field by extending the results of Kayumov and Ponnusamy \cite{KP2018} to holomorphic mappings $f$ with lacunary series from the unit polydisc $\mathbb{D}^n$ into $\mathbb{D}^n$ and from the unit ball $B$ of a complex Banach space into $B$.
Lin {\it et al.} \cite{LLP2023} have studied other types of Bohr's inequality for holomorphic mappings $f$ with lacunary series from the unit polydisc $\mathbb{D}^n$ into $\mathbb{D}^n$.\\[2mm]
\indent Let $X$ and $Y$ be complex Banach spaces. A mapping $P: X\to Y$ is called a homogeneous polynomial of degree $k\in\mathbb{N}$ if there exists a $k$-linear mapping 
$u :X^k\to Y$ such that $P(x)=u(x,x,\ldots,x)$ for every $x\in X$. Note that if $P$ is a homogeneous polynomial of degree $k$, then $P(\mu x)=\mu^k P(x)$ for all $x\in X$ and $\mu\in \mathbb{C}$.\\[2mm]
\indent In this paper, the degree of a homogeneous polynomial is denoted by a subscript. Note that if $P_k$ is a $k$-homogeneous polynomial from $X$ into $Y$, there exists a unique symmetric $k$-linear mapping $u$ such that $P_k(x)=u(x,...,x)$ (see \cite[Chapter 1 and 2]{M1986}). For a domain $\mathcal{D}\subset X$, let $F$ be a holomorphic mapping from $\mathcal{D}$ into $Y$. For $z\in\mathcal{D}$,
 let $D^kF(z)$ denote the $k$-th Fr\'echet derivative of $F$ at $z$. If $\mathcal{D}$ contains the origin, then
 any holomorphic mapping $F : \mathcal{D} \to Y$ can be expanded into the series
 \bea\label{g7} F(z) =\sum_{k=0}^\infty \frac{1}{k!} D^kF(0)(z^k),\eea
 for all $z$ in some neighborhood of the origin (see \cite[Chapter 6, p. 197]{GK2003}). Since $(1/k!)D^kF(0)(z^k)$ is a homogeneous polynomial of degree $k$, we use the notation $P_k(z)=(1/k!)D^kF(0)(z^k)$ all along this paper. Note that if $\mathcal{D}$ is a bounded balanced domain in a complex Banach space $X$ and $F(\mathcal{D})$ is bounded, then (\ref{g7}) converges uniformly on $r\mathcal{D}$ for each $r\in (0,1)$.\\[2mm]
\indent  Let $L(X, \mathbb{C})$ denote the set of continuous linear operators from $X$ into $\mathbb{C}$. For each $x\in X \setminus\{0\}$, let
 \beas T(x)=\left\{l_x \in L(X, \mathbb{C}): l_x(x)=\Vert x\Vert, ~\Vert l_x\Vert =1\right\},\eeas
 where $\Vert l_x\Vert=\sup\{|l_x(w)| : \Vert w\Vert=1\}$.
In view of the Hahn-Banach theorem, $T(x)$ is nonempty. 
The following question naturally arises:
 \begin{ques}\label{Q1} Let $B_X$ be the unit ball of the complex Banach space $X$. Is it possible to establish the Bohr-type inequalities for the Ces\'aro operator, Bernardi integral operator, $\beta$-Ces\'aro operator $(\beta\not=1)$, and discrete Fourier transform defined on a set of holomorphic mappings $F:B_X\to\ol{\mathbb{D}^n}$?\end{ques}
 This paper primarily provides an affirmative answer to Question \ref{Q1}.
\section{Main results}
\noindent The following are key lemmas of this paper and will be used to prove the main results.
\begin{lem}\cite{DP2008}\label{lem} If $f(z)=\sum_{n=0}^\infty a_n z^n$ is holomorphic in $\mathbb{D}$ with $|f(z)|\leq1$ in $\mathbb{D}$, then we have
\beas \frac{\left|f^{(n)}(z)\right|}{n!}\leq \frac{1-|f(z)|^2}{(1-|z|)^{n-1}(1-|z|^2)}\quad\text{and}\quad |a_n|\leq 1-|a_0|^2\quad\text{for}\quad n\geq 1,\;|z|<1.\eeas\end{lem}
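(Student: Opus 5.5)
The lemma has two parts, and I will prove the coefficient bound first, since the derivative bound reduces to it after a change of variables. If $|f(\zeta_0)|=1$ somewhere in $\mathbb{D}$, then $f$ is a unimodular constant by the maximum principle. In that case both sides vanish for $n\ge1$: $f^{(n)}\equiv 0$ and $1-|f|^2=0$. So I assume $|f|<1$ on $\mathbb{D}$.

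For $|a_n|\le 1-|a_0|^2$ I will use the classical Wiener averaging trick. Let $\omega=e^{2\pi i/n}$ and set
\[
g(z)=\frac1n\sum_{j=0}^{n-1} f(\omega^j z)=a_0+a_nz^n+a_{2n}z^{2n}+\cdots .
\]
Then $g$ is holomorphic with $|g|\le 1$, and $g(z)=h(z^n)$ where $h(w)=a_0+a_nw+\cdots$ is holomorphic on $\mathbb{D}$ with $|h|\le1$. The Schwarz--Pick lemma at the origin gives $|h'(0)|\le 1-|h(0)|^2$, that is, $|a_n|\le 1-|a_0|^2$.

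For the derivative estimate at a point $z\in\mathbb{D}$, I will recentre by composing with a disc automorphism. Put $\varphi_z(\zeta)=(\zeta+z)/(1+\bar z\zeta)$ and $F=f\circ\varphi_z$. Then $F$ maps $\mathbb{D}$ into $\overline{\mathbb{D}}$ and $F(0)=f(z)$, so the first part gives $|F^{(k)}(0)|/k!\le 1-|f(z)|^2$ for every $k\ge1$. Inverting, $f=F\circ\varphi_z^{-1}$ with $\psi(w):=\varphi_z^{-1}(w)=(w-z)/(1-\bar z w)$. This map expands about $w=z$ as
\[
\psi(w)=\frac{w-z}{1-|z|^2}\cdot\frac{1}{1-\frac{\bar z (w-z)}{1-|z|^2}}
=\sum_{m\ge1}\frac{\bar z^{\,m-1}(w-z)^m}{(1-|z|^2)^m}.
\]
Substituting $F(\zeta)=\sum_k b_k\zeta^k$, the coefficient $f^{(n)}(z)/n!$ of $(w-z)^n$ is a combination of the $b_k$, $1\le k\le n$. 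Replacing every coefficient by its absolute value, using $|b_k|\le 1-|f(z)|^2$, and writing $\rho=|z|$ gives
\[
\frac{|f^{(n)}(z)|}{n!}\le (1-|f(z)|^2)\,[t^n]\sum_{k\ge1}\Phi(t)^k,\qquad \Phi(t)=\frac{t/(1-\rho^2)}{1-\rho t/(1-\rho^2)}.
\]

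The remaining step is to evaluate this majorant coefficient. Summing the geometric series,
\[
\sum_{k\ge1}\Phi^k=\frac{\Phi}{1-\Phi}=\frac{t}{(1-\rho^2)-(1+\rho)t}=\frac{t}{(1+\rho)\bigl(1-\rho-t\bigr)}.
\]
Its $t^n$ coefficient is
\[
\frac{1}{(1+\rho)(1-\rho)^{n}}=\frac{1}{(1-\rho)^{n-1}(1-\rho^2)},
\]
which is exactly the claimed bound. I expect the main obstacle to be this coefficient bookkeeping, namely justifying the majorant comparison; the cleanest route is to regard it as a formal power series domination argument.
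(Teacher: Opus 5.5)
The paper does not prove this lemma: it is quoted from Dai--Pan, and the four theorems use only the coefficient part $|a_n|\le 1-|a_0|^2$. Your argument is correct and self-contained. It is essentially the standard proof of this Schwarz--Pick type estimate, usually attributed to Ruscheweyh.

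The structure is as follows. Wiener's averaging reduces the coefficient bound to $|h'(0)|\le 1-|h(0)|^2$. Precomposing with the automorphism $\varphi_z$ transports that bound to $F=f\circ\varphi_z$. The estimate at a general point $z$ then follows by majorising the Taylor coefficients of $\psi^k$ about $z$.

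Your computation checks out: $\Phi(t)=t/\bigl((1-\rho^2)-\rho t\bigr)$ and $\Phi/(1-\Phi)=t/\bigl((1+\rho)(1-\rho-t)\bigr)$. The $t^n$-coefficient of the latter is $1/\bigl((1+\rho)(1-\rho)^n\bigr)=1/\bigl((1-\rho)^{n-1}(1-\rho^2)\bigr)$.

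The step you flagged as the main obstacle is in fact routine. Since $|b_k|\le 1$ and $\Phi(t)<1$ precisely for $0\le t<1-\rho$, the double series $\sum_k b_k\,\psi(z+u)^k$ converges absolutely for $|u|<1-\rho$. It may therefore be rearranged in powers of $u$, and uniqueness of Taylor coefficients identifies the result with the expansion of $f$ about $z$. The term $k=0$ contributes only to the constant coefficient, so it plays no role for $n\ge 1$.

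Compared with a bare citation, your proof shows that the two inequalities in the lemma are really one. The coefficient bound is the case $z=0$ of the derivative bound, and conversely the derivative bound is the coefficient bound transported by M\"obius invariance. It also explains the shape of the right-hand side: its majorant has radius of convergence exactly $1-|z|$, the distance from $z$ to $\partial\mathbb{D}$.
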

\noindent Before proceeding to the main result, it is necessary to establish the majorant series corresponding to the Ces\'aro operator for a holomorphic mapping $F:B_X\to\ol{\mathbb{D}^n}$ with 
\beas F(z)=a+\sum_{s=1}^\infty P_s(z),\quad z\in B_X,\eeas
where $P_s(z)=(1/s!) D^s F(0)(z^s)$ and $F(z)=(F_1(z),F_2(z),\ldots,F_n(z))$.
We consider the mapping $F_j: B_X\to\ol{\mathbb{D}}$ defined by
\beas F_j(z)=a_j+\sum_{s=1}^\infty (P_{s})_j (z),\quad \text{where}\quad z\in B_X\quad\text{and}\quad j\in\{1,2\ldots,n\}.\eeas
In view of the relation (\ref{e14}), we consider the following integral for the holomorphic mapping $F_j$ with values in $\ol{\mathbb{D}}$:
\beas\int_{0}^1\frac{F_j(t z)}{1-t z}\; d t&=&\int_{0}^1\frac{a_j}{1-t z}\; dt+\sum_{s=1}^\infty (P_{s})_j (z) \int_{0}^1\frac{t^s}{1-t z}\; dt\nonumber\\[2mm]
&=&a_j\left(1+\frac{z}{2}+\frac{z^2}{3}+\cdots\right)+\sum_{s=1}^\infty (P_{s})_j (z)\int_{0}^1 t^s\left(1+t z+t^2 z^2+\cdots\right)\; dt\nonumber\\[2mm]
&=&\sum_{n=0}^\infty\frac{a_j z^n}{n+1}+(P_{1})_j (z)\left(\frac{1}{2}+\frac{z}{3}+\frac{z^2}{4}+\cdots\right)+(P_{2})_j (z)\left(\frac{1}{3}+\frac{z}{4}\right.\nonumber\\
&&\left.+\frac{z^2}{5}+\cdots\right)+(P_{3})_j (z)\left(\frac{1}{4}+\frac{z}{5}+\frac{z^2}{6}+\cdots\right)+\cdots\nonumber\\[2mm]
&=&\sum_{n=0}^\infty\frac{a_j z^n}{n+1}+\frac{1}{2}(P_{1})_j (z)+\frac{1}{3}\left((P_{2})_j (z)+z(P_{1})_j (z)\right)\nonumber\\[2mm]
&&+\frac{1}{4}\left((P_{3})_j (z)+z^2(P_{2})_j (z)+z^3 (P_{1})_j (z)\right)+\cdots.\eeas
Therefore, the majorant series corresponding to the Ces\'aro operator for a holomorphic mapping $F_j$ is as follows:
\beas \sum_{n=0}^\infty\frac{|a_j| \Vert z\Vert^n}{n+1}+\sum_{n=1}^\infty \frac{1}{n+1}\left(\sum_{k=1}^n\Vert z\Vert^{k-1}  \left|\left(P_{n-k+1}\right)_j(z)\right|\right).\eeas
As $ \Vert P_s(z)\Vert_\infty=\max_{1\leq j\leq n} |(P_{s})_j(z)|$, thus, the majorant series corresponding to the Ces\'aro operator for a holomorphic mapping $F: B_X\to\ol{\mathbb{D}^n}$ is 
\beas \mathcal{C}_F(\Vert z\Vert):=\sum_{n=0}^\infty\frac{\Vert a\Vert_\infty \Vert z\Vert^n}{n+1}+\sum_{n=1}^\infty \frac{1}{n+1}\left(\sum_{k=1}^n\Vert z\Vert^{k-1}  \left\Vert\left(P_{n-k+1}\right)(z)\right\Vert_\infty\right).\eeas
In the following result, we establish the Bohr-type inequality for the Ces\'aro operator defined on a set of holomorphic mappings $F:B_X\to\ol{\mathbb{D}^n}$.
\begin{theo}\label{Th1}
Let $B_X$ be the unit ball of a complex Banach space $X$. Let $a=(a_1, a_2, \ldots, a_n)\in\ol{\mathbb{D}^n}$ and let $F: B_X \to\ol{\mathbb{D}^n}$ be a holomorphic mapping with 
\beas F(z)=a+\sum_{s=1}^\infty P_s(z),\quad z\in B_X,\eeas
where $P_s(z)=(1/s!) D^s F(0)(z^s)$. Assume that $|a_j|=\Vert a\Vert_\infty$ for all $j\in\{1, 2, \ldots,n\}$. Then, we have 
\bea\label{e1}\mathcal{C}_F(\Vert z\Vert)\leq \frac{1}{r}\log\frac{1}{1-r}\quad\text{for}\quad \Vert z\Vert=r\leq R,  \eea
where $R\in(0, 1)$ is the positive root of the equation 
\beas 3(1-r )\log\frac{1}{1-r}-2r=0.\eeas
The number $R$ cannot be improved.
 \end{theo}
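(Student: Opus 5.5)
Fix $z\in B_X\setminus\{0\}$ with $\Vert z\Vert=r$ and put $w=z/\Vert z\Vert$. For each $j$, consider the one-variable slice $g_j(\lambda)=F_j(\lambda w)$, $\lambda\in\mathbb{D}$. It is holomorphic with $|g_j|\le 1$ and has the expansion
\[
g_j(\lambda)=a_j+\sum_{s\ge1}(P_s)_j(w)\lambda^s .
\]
By Lemma \ref{lem} applied to $g_j$, we get $|(P_s)_j(w)|\le 1-|a_j|^2$ for all $s\ge1$. Homogeneity gives $|(P_s)_j(z)|=r^s|(P_s)_j(w)|\le (1-|a_j|^2)r^s$. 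The hypothesis $|a_j|=\Vert a\Vert_\infty=:\alpha$ for every $j$ makes this bound uniform in $j$, so
\[
\Vert P_s(z)\Vert_\infty\le (1-\alpha^2)r^s .
\]
This is where the equal-modulus assumption is used: without it the maximum over $j$ of $1-|a_j|^2$ would not be controlled by $\Vert a\Vert_\infty$.

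Insert this into $\mathcal{C}_F(r)$. The inner sum becomes $\sum_{k=1}^n r^{k-1}\Vert P_{n-k+1}(z)\Vert_\infty\le (1-\alpha^2)\,n r^n$, hence
\[
\mathcal{C}_F(r)\le \alpha\sum_{n\ge0}\frac{r^n}{n+1}+(1-\alpha^2)\sum_{n\ge1}\frac{n}{n+1}r^n .
\]
Write $L(r)=\frac1r\log\frac1{1-r}$ for the first series. The second series equals $\frac{1}{1-r}-L(r)$. The bound is therefore
\[
\alpha L+(1-\alpha^2)\Bigl(\tfrac1{1-r}-L\Bigr),
\]
a quadratic in $\alpha\in[0,1]$. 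It should be at most $L$ precisely when
\[
(1-\alpha)\Bigl[(1+\alpha)\Bigl(\tfrac1{1-r}-L\Bigr)-L\Bigr]\le0 .
\]
The worst case is $\alpha\to1^-$, which gives the condition $2\bigl(\frac1{1-r}-L\bigr)\le L$, i.e. $\frac{2}{1-r}\le 3L$. This is equivalent to $2r\le 3(1-r)\log\frac1{1-r}$, and it holds for $r\le R$ by monotonicity of the relevant function near $0$. For $\alpha<1$ the bracket is smaller, so the inequality holds for all $\alpha$.

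For sharpness, take any $l_{w}\in T(w)$ for a fixed unit vector $w$. Set
\[
F(z)=\bigl(\varphi_b(l_w(z)),\ldots,\varphi_b(l_w(z))\bigr),\qquad \varphi_b(\lambda)=\frac{b-\lambda}{1-b\lambda},\quad b\in(0,1).
\]
All components then have the same modulus $b$ at $0$, and $P_s(z)=-(1-b^2)b^{s-1}l_w(z)^s$. Evaluate at $z=rw$; note that $|l_w(rw)|=r$. The majorant equals
\[
bL+(1-b^2)\sum_{n\ge1}\frac{1}{n+1}\sum_{k=1}^n b^{n-k}r^n .
\]
Letting $b\to1^-$, this tends to $L+2\bigl(\frac1{1-r}-L\bigr)-L$ to first order in $(1-b)$. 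It exceeds $L$ whenever $r>R$, since then $\frac{2}{1-r}>3L$.

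The main obstacle is making this limit rigorous. I would expand the sum in powers of $(1-b)$ and show that the coefficient of $(1-b)$ is $2\bigl(\frac1{1-r}-L\bigr)-L$, which is positive for $r>R$. This mirrors the proof of Theorem A.
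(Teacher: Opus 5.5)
Your proposal is correct and is essentially the paper's proof: the same one-variable slices with Lemma~\ref{lem} made uniform in $j$ by the equal-modulus hypothesis, the same bound $\alpha L+(1-\alpha^2)\bigl(\frac{1}{1-r}-L\bigr)$, and the same extremal map (a disc automorphism composed with $l_{z_0}$, times a unimodular vector) expanded to first order in $1-b$; the only cosmetic difference is that you factor out $1-\alpha$ where the paper differentiates $G(x,r)$ twice in $x$. The limit you flag as the main obstacle closes by that same factorization, since the majorant minus $L$ equals $(1-b)\bigl[(1+b)S(b)-L\bigr]$ with $S(b)=\sum_{n\ge1}\frac{r^n}{n+1}\sum_{k=1}^n b^{n-k}$ continuous on $[0,1]$ and $S(1)=\frac{1}{1-r}-L$, which gives limit $\frac{2}{1-r}-3L>0$ for $r>R$ (your displayed first-order expression is just missing the factor $1-b$).
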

\begin{proof}
Let $z_0\in\pa B_X$ be fixed. Since $F=(F_1,F_2,\cdots,F_n)$, we consider $f_j(t)=F_j(t z_0)$ for $t\in \mathbb{D}$. Then, $f_j\in H(\mathbb{D}, \ol {\mathbb{D}})$ and 
\beas f_j(t)=a_j+\sum_{s=1}^\infty (P_{s})_j (z_0)t^s,\quad t\in\mathbb{D}.\eeas
In view of \textrm{Lemma \ref{lem}}, we have
\beas |(P_{s})_j(z_0)|\leq 1-|a_j|^2=1-\Vert a\Vert^2_{\infty}\quad \text{for}\quad s\geq 1\eeas
for each fixed $j$ $(j=1,2,\cdots)$.
Since $ \Vert P_s(z_0)\Vert_{\infty}=\max\limits_{j} |(P_{s})_j(z_0)|$, thus, we have
\bea\label{a20}  \Vert P_s(z_0)\Vert_{\infty}\leq 1-\Vert a\Vert^2_{\infty}\quad \text{for}\quad s\geq 1.\eea
Let $x=\Vert a\Vert_{\infty}\in[0,1]$. It is evident that, if $x=1$, then $P_s(z_0)=0$ for $s\geq 1$. Therefore, the inequality (\ref{e1}) holds for $z=rz_0$ and $r\in[0,1)$. Thus, 
we may assume that $x\in[0,1)$. Using (\ref{a20}), for $r\in(0,1)$, we have
\beas 
&&\sum_{n=0}^\infty \frac{x }{n+1}\Vert rz_0\Vert^n+\sum_{n=1}^\infty \frac{1}{n+1}\left(\sum_{k=1}^n  \Vert rz_0\Vert^{k-1} \Vert P_{n-k+1}(rz_0)\Vert_\infty\right)\\[2mm]
&\leq&x\sum_{n=0}^\infty \frac{r^n}{n+1}+\sum_{n=1}^\infty\frac{1}{n+1}\left((1-x^2) \sum_{k=1}^n r^n\right)\\
&=&\frac{x}{r}\log\frac{1}{1-r}+(1-x^2)\sum_{n=1}^\infty \frac{nr^n}{n+1}\\
&=& \frac{x}{r}\log\frac{1}{1-r}+(1-x^2)\left(\frac{1}{1-r}- \frac{1}{r}\log\frac{1}{1-r}\right):=G(x,r).\eeas
%Let 
%\beas G(x,r)=\frac{x}{r}\log\frac{1}{1-r}+(1-x^2)\left(\frac{1}{1-r}- \frac{1}{r}\log\frac{1}{1-r}\right),~ \text{where}~ x\in[0,1),~r\in(0,1). \eeas
Differentiating partially $G(x,r)$ twice with respect to $x$, we obtain
\beas &&\frac{\pa}{\pa x}G(x,r)=\frac{1}{r}\log\frac{1}{1-r}-2x\left(\frac{1}{1-r}- \frac{1}{r}\log\frac{1}{1-r}\right)\\[2mm]\text{and}
&&\frac{\pa^2}{\pa x^2}G(x,r)=-2\left(\frac{1}{1-r}- \frac{1}{r}\log\frac{1}{1-r}\right)\leq 0,\eeas
which shows that $\frac{\pa}{\pa x}G(x,r)$ is a monotonically decreasing function of $x\in[0, 1)$ and it follows that 
\beas \frac{\pa}{\pa x}G(x,r)\geq \lim_{x\to1^-}\frac{\pa}{\pa x}G(x,r)=\frac{1}{r(1-r)}\left(-2r +3(1-r )\log\frac{1}{1-r}\right)\geq 0\eeas
for $r\leq R$, where $R(\approx 0.533589)\in(0, 1)$ is the unique positive root of the equation
\beas G_1(r):=-2r +3(1-r )\log\frac{1}{1-r}.\eeas
\begin{figure}[H]
\centering
\includegraphics[scale=1]{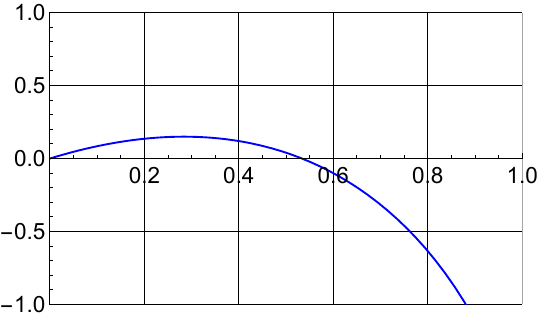}
\caption{The graph of $G_1(r)$ in (0, 1)}
\label{fig1}
\end{figure}
\noindent Thus, $G(x,r)$ is a monotonically increasing function of $x\in[0, 1)$ for $r\leq R$, and it follows that
\beas G(x,r)\leq \lim_{x\to 1^-}G(x,r)=\frac{1}{r}\log\frac{1}{1-r}\quad\text{for all}\quad r\leq R.\eeas
\indent To prove the sharpness of the constant $R$, we consider the function
\beas F_1(z)=f(l_{z_0}(z))u,\quad z\in B_X,\eeas
where $z_0\in \pa B_X$, $l_{z_0}\in T(z_0)$, $u=(u_1,u_2,\dots,u_n)\in \pa \mathbb{D}^n$ with $|u_1|=|u_2|=\dots =|u_n|=1$, and 
\beas f(t)= \frac{\lambda -t}{1-\lambda t}, \quad t\in \mathbb{D},~\lambda \in (0,1).\eeas
Then, we have
\beas F_1(rz_0)=f\left(l_{z_0}(rz_0)\right)u = f\left(r \; l_{z_0}(z_0)\right) u= f(r)u= a+\sum_{s=1}^\infty P_s(rz_0),\eeas
where $a=\lambda u$ and $P_s(z_0)=\lambda^{s-1}(\lambda^2 -1)u$ for $s\geq 1$. Therefore,
\bea &&\sum_{n=0}^\infty \frac{\Vert \lambda u\Vert_\infty }{n+1}\Vert rz_0\Vert^n+\sum_{n=1}^\infty \frac{1}{n+1}\left(\sum_{k=1}^n  \Vert rz_0\Vert^{k-1} \Vert P_{n-k+1}(rz_0)\Vert_\infty\right)\nonumber\\[2mm]
&=&\sum_{n=0}^\infty \frac{\lambda r^n}{n+1} +\sum_{n=1}^\infty \frac{1}{n+1}\left(\sum_{k=1}^n r^n \lambda^{n-k}(1-\lambda^2)\right)\nonumber\\[2mm]
&=&\frac{\lambda}{r}\log{\frac{1}{1-r}}+(1-\lambda^2)\sum_{n=1}^\infty \frac{r^n}{n+1}\left(\sum_{k=1}^n \lambda^{n-k}\right)\nonumber\\[2mm]
&=& \frac{\lambda}{r}\log{\frac{1}{1-r}}+(1-\lambda^2)\sum_{n=1}^\infty\left( \frac{r^n}{n+1}\cdot\frac{1-\lambda^n}{1-\lambda}\right)\nonumber\eea
\bea&=& \frac{\lambda}{r}\log{\frac{1}{1-r}}+(1+\lambda)\sum_{n=1}^\infty \left(\frac{r^n}{n+1}-\frac{(\lambda r)^n}{n+1}\right)\nonumber\hspace{3cm}\\[2mm]
&=&\frac{1}{r}\log{\frac{1}{1-r}}+\frac{2\lambda}{r}\log{\frac{1}{1-r}}-\frac{1+\lambda}{\lambda r}\log{\frac{1}{1-\lambda r}}\nonumber\\[2mm]
\label{e3}&=& \frac{1}{r}\log{\frac{1}{1-r}} +(1-\lambda)\frac{3(1-r)\log(1-r)+2r}{r(1-r)}+D_{\lambda}(r),\eea
where
\bea D_{\lambda}(r)&=& \frac{3-\lambda}{r}\log{\frac{1}{1-r}}-\frac{2(1-\lambda)}{1-r}-\frac{1+\lambda}{\lambda r}\log{\frac{1}{1-\lambda r}}\nonumber\\
&=&\sum_{n=2}^\infty \left(\frac{3-\lambda}{n}-2(1-\lambda)-\frac{(1+\lambda)\lambda^{n-1}}{n}\right)r^{n-1}\nonumber\\
\label{e4}&= & O\left((1-\lambda)^2\right) \quad \text{as} \quad \lambda\to 1^-.\eea
Furthermore, it is evident that for $r>R$, the following inequality holds
\bea\label{e5} \frac{3(1-r)\log(1-r)+2r}{r(1-r)}>0.\eea
From (\ref{e3}), (\ref{e4}) and (\ref{e5}), we conclude that $R$ cannot be improved. This completes the proof.
\end{proof}
Before proceeding to our next result, it is necessary to establish the majorant series corresponding to the Bernardi operator for a holomorphic mapping $F:B_X\to\ol{\mathbb{D}^n}$ with $F(z)=\sum_{s=m}^\infty P_s(z)$, where $m\in\mathbb{N}\cup\{0\}$, 
$z\in B_X$, $P_s(z)=(1/s!) D^s F(0)(z^s)$ and $F(z)=(F_1(z),F_2(z),\ldots,F_n(z))$.
We consider the mapping $F_j: B_X\to\ol{\mathbb{D}}$ defined by
\beas F_j(z)=\sum_{s=m}^\infty (P_{s})_j (z),\quad \text{where}\quad z\in B_X\quad\text{and}\quad j\in\{1,2\ldots,n\}.\eeas
In view of the relation (\ref{e16}), we consider the following integral for the holomorphic mapping $F_j: B_X\to \ol{\mathbb{D}}$: 
\beas (1+\beta)\int_0^1 F_j(t z) t^{\beta-1}\; d t=(1+\beta)\sum_{s=m}^\infty (P_{s})_j (z) \int_{0}^1 t^{s+\beta-1}\; dt
=(1+\beta)\sum_{s=m}^\infty \frac{(P_{s})_j (z)}{s+\beta}.\eeas
It is evident that, the majorant series corresponding to the Bernardi operator for a holomorphic mapping $F:B_X\to\ol{\mathbb{D}^n}$ is 
\beas (1+\beta)\sum_{s=m}^\infty \frac{\Vert P_{s} (z)\Vert_\infty}{s+\beta}.\eeas
In the following result, we establish the Bohr-type inequality for the Bernardi operator defined on a set of holomorphic mappings $F:B_X\to\ol{\mathbb{D}^n}$.
\begin{theo}\label{Th2}
Let $B_X$ be the unit ball of a complex Banach space $X$, $m\in\mathbb{N}\cup\{0\}$ and $\beta>-m$. Let $F: B_X \to\ol{\mathbb{D}^n}$ be a holomorphic mapping with 
\beas F(z)=P_m(z)+\sum_{s=m+1}^\infty P_s(z),\quad z\in B_X,\eeas
where $P_s(z)=(1/s!) D^s F(0)(z^s)$. Assume that $\Vert P_m(z)\Vert_\infty$ is constant on $\pa B_X$ and $|(P_m)_j(z_0)|=\Vert P_m(z_0)\Vert_\infty$ for all $j\in\{1, 2, \ldots,n\}$ and $z_0\in\pa B_X$. Then, we have 
\bea\label{e6} \sum_{s=m}^\infty \frac{\left\Vert P_{s}(z)\right\Vert_\infty }{\beta+s} \leq \frac{r^m}{m+ \beta}\eea
for $\Vert z\Vert=r\leq R(\beta)$, where $R(\beta)\in(0, 1)$ is the unique root of the equation 
\beas \frac{1}{\beta+m}-2\sum_{s=m+1}^\infty \frac{1}{\beta+s}r^{s-m}=0.\eeas
The number $R(\beta)$ cannot be improved.
 \end{theo}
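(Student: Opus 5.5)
We follow the pattern of the proof of Theorem \ref{Th1}, reducing everything to one complex variable along complex lines through the origin. Fix $z_0\in\pa B_X$ and set $f_j(t)=F_j(tz_0)$ for $t\in\mathbb{D}$. Then $f_j\in H(\mathbb{D},\ol{\mathbb{D}})$ and
\[
f_j(t)=\sum_{s=m}^\infty (P_s)_j(z_0)t^s .
\]
Write $f_j(t)=t^m g_j(t)$. By the maximum principle (Schwarz lemma), $g_j$ maps $\mathbb{D}$ into $\ol{\mathbb{D}}$, and $g_j(0)=(P_m)_j(z_0)$. Applying Lemma \ref{lem} to $g_j$ gives
\[
|(P_s)_j(z_0)|\le 1-|(P_m)_j(z_0)|^2\quad (s\ge m+1).
\]
By the hypothesis $|(P_m)_j(z_0)|=\Vert P_m(z_0)\Vert_\infty=:x$, this yields $\Vert P_s(z_0)\Vert_\infty\le 1-x^2$ for $s\ge m+1$. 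Also $x\in[0,1]$, since $\Vert F\Vert_\infty\le 1$ forces $|g_j(0)|\le1$. If $x=1$ the higher terms vanish and the inequality is trivial.

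For $z=rz_0$, homogeneity gives the bound
\[
\sum_{s=m}^\infty \frac{\Vert P_s(rz_0)\Vert_\infty}{\beta+s}\le \frac{x r^m}{\beta+m}+(1-x^2)\sum_{s=m+1}^\infty\frac{r^s}{\beta+s}=:H(x,r).
\]
Note $\beta+s>0$ for all $s\ge m$ because $\beta>-m$. The function $H$ is a concave quadratic in $x$. Its derivative in $x$ is decreasing, so on $[0,1)$ it is bounded below by its value at $x=1$, namely
\[
r^m\Big(\tfrac{1}{\beta+m}-2\sum_{s\ge m+1}\tfrac{r^{s-m}}{\beta+s}\Big).
\]
This is $\ge0$ exactly when $r\le R(\beta)$. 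Hence $H(x,r)\le H(1,r)=r^m/(\beta+m)$ there.

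Existence and uniqueness of $R(\beta)$ follow from one observation. The function $\phi(r)=\frac{1}{\beta+m}-2\sum_{s\ge m+1}\frac{r^{s-m}}{\beta+s}$ is strictly decreasing on $(0,1)$, with $\phi(0^+)>0$ and $\phi(r)\to-\infty$ as $r\to1^-$, since the harmonic-type series diverges.

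Sharpness is the delicate part, and we plan it as in Theorem \ref{Th1}. Take $l_{z_0}\in T(z_0)$ and $u\in\pa\mathbb{D}^n$ with all $|u_j|=1$, and set
\[
F(z)=(l_{z_0}(z))^m\,\frac{\lambda-l_{z_0}(z)}{1-\lambda l_{z_0}(z)}\,u,\qquad \lambda\in(0,1).
\]
The hypotheses must then hold. Here $P_m(z)=\lambda\, l_{z_0}(z)^m u$, so $|(P_m)_j(z_0)|=\lambda$ for all $j$. The requirement that $\Vert P_m\Vert_\infty$ be constant on $\pa B_X$ is the main obstacle: $|l_{z_0}(w)|$ is not constant on $\pa B_X$ in general. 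I would check whether the hypothesis is only used along the ray through the given $z_0$; the argument above only uses the point $z_0$, so only the norm at that point matters. If necessary, I would state sharpness along $z=rz_0$, as is done for Theorem \ref{Th1}.

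For $m=0$ the term $l_{z_0}(z)^0$ is constant and there is no issue. Along $z=rz_0$ the majorant equals
\[
\frac{\lambda r^m}{\beta+m}+(1-\lambda^2)\sum_{s\ge m+1}\frac{\lambda^{s-m-1}r^s}{\beta+s}.
\]
Subtracting $r^m/(\beta+m)$ gives
\[
(1-\lambda)r^m\Big(-\tfrac{1}{\beta+m}+(1+\lambda)\sum_{s\ge m+1}\tfrac{\lambda^{s-m-1}r^{s-m}}{\beta+s}\Big).
\]
As $\lambda\to1^-$, the bracket tends to $-\phi(r)$, which is $>0$ for $r>R(\beta)$. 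Hence the inequality fails for $\lambda$ close to $1$, and the constant $R(\beta)$ cannot be improved.
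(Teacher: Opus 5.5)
Your proof of the inequality is the paper's proof. You restrict to the slice $t\mapsto F_j(tz_0)$, apply Lemma \ref{lem} to $g_j=f_j/t^m$, use the pointwise equal-modulus condition, and use monotonicity of the concave quadratic in $x$. Your sharpness computation uses the paper's extremal map $F_2(z)=l_{z_0}(z)^m\frac{\lambda-l_{z_0}(z)}{1-\lambda l_{z_0}(z)}u$. Your coefficients $\lambda^{s-m-1}(\lambda^2-1)$ are correct; the paper writes $\lambda^{s-1}(\lambda^2-1)$. Your limit argument is also cleaner than the paper's $O((1-\lambda)^2)$ remainder.

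\textbf{The gap.} The gap is the one you flagged, and your proposed way around it does not close it. The hypothesis that $\Vert P_m\Vert_\infty$ is constant on $\pa B_X$ restricts which maps $F$ are admissible, not which points $z$ are tested. Your observation that the upper bound only uses the point $z_0$ shows that this hypothesis is superfluous for the inequality. It does not make $F_2$ a legitimate witness, since $F_2$ has $\Vert P_m(z)\Vert_\infty=\lambda|l_{z_0}(z)|^m$, which is not constant on $\pa B_X$. Likewise, ``stating sharpness along $z=rz_0$'' changes nothing, because that is already where you test. The paper's proof has the same defect: it never checks the hypothesis for $F_2$.

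\textbf{The gap cannot be filled as the statement stands.} Suppose $m\ge1$ and $\dim X\ge2$. The two hypotheses together say that, for every $j$, $|(P_m)_j|$ equals a single constant $c$ on $\pa B_X$. The restriction of $(P_m)_j$ to any two-dimensional subspace is a binary form of degree $m$ over $\mathbb{C}$, so it vanishes at some $w\neq0$. Hence $c=0$ and $P_m\equiv0$.

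For every admissible $F$, Cauchy's estimate $|(P_s)_j(z_0)|\le1$ then gives
$\sum_{s\ge m}\Vert P_s(rz_0)\Vert_\infty/(\beta+s)\le\sum_{s\ge m+1}r^s/(\beta+s)$. At $r=R(\beta)$ the right-hand side equals $r^m/(2(\beta+m))$, which is strictly less than $r^m/(\beta+m)$. By continuity and monotonicity of $\sum_{s\ge m+1}r^{s-m}/(\beta+s)$, the inequality holds for all admissible $F$ on some range $\Vert z\Vert\le R'$ with $R'>R(\beta)$. So the sharpness assertion is false in this case.

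\textbf{What can be proved.} The assertion is correct when $m=0$ or $\dim X=1$, since then constancy holds automatically for $F_2$. More usefully, you can delete the constancy assumption and keep only the pointwise condition $|(P_m)_j(z)|=\Vert P_m(z)\Vert_\infty$ on $\pa B_X$. Then $F_2$ is admissible, because all its components satisfy $|(P_m)_j(z)|=\lambda|l_{z_0}(z)|^m$, and your argument proves both the inequality and its sharpness.
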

\begin{proof}
Let $z_0\in\pa B_X$ be fixed. Since $F=(F_1,F_2,\cdots,F_n)$, let $f_j(t)=F_j(t z_0)$ for $t\in \mathbb{D}$. Then, $f_j\in H(\mathbb{D}, \ol {\mathbb{D}})$ and 
\beas f_j(t)=t^m\left((P_m)_j(z_0)+\sum_{s=m+1}^\infty (P_{s})_j(z_0)t^{s-m}\right),\quad t\in\mathbb{D}.\eeas
Let $g_j(t)=(P_m)_j(z_0)+\sum_{s=m+1}^\infty (P_{s})_j(z_0)t^{s-m}$ for $t\in\mathbb{D}$. Then, $g_j(t)\in H(\mathbb{D}, \ol{\mathbb{D}})$.
In view of \textrm{Lemma \ref{lem}}, for each fixed $j$ $(j=1,2,\cdots, n)$, we have
\beas |(P_{s})_j(z_0)|\leq 1-|(P_m)_j(z_0)|^2=1-\Vert P_m(z_0)\Vert^2_{\infty}\quad\text{for}\quad s\geq m+1.\eeas
Thus, we have
\bea\label{e7}  \Vert P_s(z_0)\Vert_{\infty}\leq 1-\Vert P_m(z_0)\Vert^2_{\infty}\quad\text{for}\quad s\geq m+1.\eea
 Let $x=\Vert P_m(z_0)\Vert_{\infty}\in[0,1]$. If $x=1$, then $P_s(z_0)=0$ for all $s\geq m+1$.
 Therefore, the inequality (\ref{e6}) holds for $z=rz_0$ and $r\in[0,1)$. We assume that $x\in[0,1)$. For $r\in(0,1)$ and by using (\ref{e7}), we have
\beas\sum_{s=m}^\infty \frac{\left\Vert P_{s}(rz_0)\right\Vert_\infty}{\beta+s}&\leq& \frac{\Vert P_m(z_0)\Vert_{\infty}}{\beta+m}r^m+(1-\Vert P_m(z_0)\Vert_{\infty}^2)\sum_{s=m+1}^\infty \frac{1}{\beta+s}r^s\\
&=& \frac{x}{\beta+m}r^m+(1-x^2)\sum_{s=m+1}^\infty \frac{1}{\beta+s}r^s:=G_2(x, r).\eeas
Differentiating partially $G_2(x,r)$ twice with respect to $x$, we obtain
\beas &&\frac{\pa}{\pa x}G_2(x,r)=\frac{1}{\beta+m}r^m-2x\sum_{s=m+1}^\infty \frac{1}{\beta+s}r^s\\[2mm]\text{and}
&&\frac{\pa^2}{\pa x^2}G_2(x,r)=-2\sum_{s=m+1}^\infty \frac{1}{\beta+s}r^s\leq 0.\eeas
Thus, $\frac{\pa}{\pa x}G_2(x,r)$ is a monotonically decreasing function of $x\in[0, 1)$ and it follows that
\beas \frac{\pa}{\pa x}G_2(x,r)\geq \lim_{x\to1^-}\frac{\pa}{\pa x}G_2(x,r)=\frac{1}{\beta+m}r^m-2\sum_{s=m+1}^\infty \frac{1}{\beta+s}r^s\geq 0\eeas
for $r\leq R(\beta)$, where $R(\beta)\in(0, 1)$ is the unique positive root of the equation
\beas G_3(r):=\frac{1}{\beta+m}-2\sum_{s=m+1}^\infty \frac{1}{\beta+s}r^{s-m}= 0.\eeas
It is evident that $G_3(r)$ is a monotonically decreasing function of $r\in(0,1)$ and $\lim_{r\to0^+} G_3(r)=1/(\beta+m)>0$. 
Therefore, $G_2(x,r)$ is a monotonically increasing function of $x$ for $r\leq R(\beta)$ and it follows that
\beas \sum_{s=m}^\infty \frac{\left\Vert P_{s}(rz_0)\right\Vert_\infty}{\beta+s}\leq G_2(x,r) \leq \lim_{x\to1^-}G_2(x,r)=\frac{1}{m+\beta}r^m \quad \text{for}\quad r\leq R(\beta).\eeas
\indent To prove the sharpness of the constant $R(\beta)$, we consider the function
\beas F_2(z)=f(l_{z_0}(z))u,\quad z\in B_X,\eeas
where $z_0\in \pa B_X$,  $l_{z_0}\in T(z_0)$, $u=(u_1,u_2,\dots,u_n)\in \pa \mathbb{D}^n$ with $|u_1|=|u_2|=\dots =|u_n|=1$, and
\beas f(t)= t^m\frac{\lambda -t}{1-\lambda t}, \quad t\in \mathbb{D},~\lambda \in (0,1).\eeas
Thus, we have
\beas F_2(rz_0)=f(l_{z_0}(rz_0))u = \sum_{s=m}^\infty P_s(rz_0),\eeas
where $P_m(rz_0)=\lambda r^mu$ and $P_s(rz_0)=\lambda^{s-1}(\lambda^2 -1)r^{s}u$ for $s\geq m+1$. 
Therefore, 
\bea\label{e8}\sum_{s=m}^\infty \frac{\left\Vert P_{s}(rz_0)\right\Vert_\infty}{\beta+s}&=&\frac{\lambda}{\beta+m}r^m+(1-\lambda^2)\sum_{s=m+1}^\infty \frac{\lambda^{s-1}}{\beta+s}r^{s}\\[2mm]
&=&\frac{1}{\beta+m}r^m-(1-\lambda)r^m\left(\frac{1}{\beta+m}-2\sum_{s=m+1}^\infty \frac{1}{\beta+s}r^{s-m}\right)+H_{\lambda}(r),\nonumber\eea
where
\beas H_{\lambda}(r)=2(\lambda-1)\sum_{s=m+1}^\infty \frac{1}{\beta+s}r^s +(1-\lambda^2)\sum_{s=m+1}^\infty \frac{\lambda^{s-1}}{\beta+s}r^s.\eeas
Letting $\lambda\to 1^-$, we obtain
\bea\label{e9} H_\lambda(r)=\sum_{s=m+1}^\infty \frac{2(\lambda-1)+(1-\lambda^2)\lambda^{s-1}}{\beta+s}r^s=O((1-\lambda)^2).\eea
Furthermore, for $r>R(\beta)$, we have the following inequality
\bea\label{e10} \frac{1}{\beta+m}-2\sum_{s=m+1}^\infty \frac{1}{\beta+s}r^{s-m}<0.\eea
From (\ref{e8}), (\ref{e9}) and (\ref{e10}), we conclude that $R(\beta)$ cannot be improved. This completes the proof.
\end{proof}
In view of (\ref{e15}), we consider the majorant series corresponding to the discrete Fourier transform for a holomorphic mapping $F:B_X\to\ol{\mathbb{D}^n}$ with $F(z)=a+\sum_{s=1}^\infty P_s(z)$, where $z\in B_X$ and
 $P_s(z)=(1/s!) D^s F(0)(z^s)$ as follows:
 \beas\mathcal{F}_F(\Vert z\Vert):=\sum_{n=0}^\infty \Vert a\Vert_\infty\Vert z\Vert^n+\sum_{n=1}^\infty\left(\sum_{k=1}^n\Vert z\Vert^{k-1}  \Vert P_{n-k+1}(z)\Vert_\infty\right).\eeas
In the following result, we establish the Bohr-type inequality for the discrete Fourier transform defined on a set of holomorphic mappings $F:B_X\to\ol{\mathbb{D}^n}$.
\begin{theo}\label{Th3}
Let $B_X$ be the unit ball of a complex Banach space $X$. Let $a=(a_1, a_2, \ldots, a_n)\in\ol{\mathbb{D}^n}$ and let $F: B_X \to\ol{\mathbb{D}^n}$ be a holomorphic mapping with 
\beas F(z)=a+\sum_{s=1}^\infty P_s(z),\quad z\in B_X,\eeas
where $P_s(z)=(1/s!) D^s F(0)(z^s)$. Assume that $|a_j|=\Vert a\Vert_\infty$ for all $j\in\{1, 2, \ldots,n\}$. Then, we have 
\bea\label{f1}\mathcal{F}_F(\Vert z\Vert)\leq \frac{1}{1-r}\quad\text{for}\quad \Vert z\Vert=r\leq 1/3. \eea
 The number $1/3$ cannot be improved.
 \end{theo}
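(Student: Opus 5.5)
We follow the scheme used for Theorem \ref{Th1}. Fix $z_0\in\pa B_X$ and put $f_j(t)=F_j(tz_0)$ for $t\in\mathbb{D}$. Then $f_j\in H(\mathbb{D},\ol{\mathbb{D}})$ and $f_j(t)=a_j+\sum_{s\ge1}(P_s)_j(z_0)t^s$. By Lemma \ref{lem} and the hypothesis $|a_j|=\Vert a\Vert_\infty$,
\[
\Vert P_s(z_0)\Vert_\infty\le 1-x^2\quad (s\ge 1),\qquad x=\Vert a\Vert_\infty .
\]
If $x=1$, all $P_s(z_0)=0$ and $\mathcal{F}_F(r)=1/(1-r)$, so we may assume $x\in[0,1)$. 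Using $\Vert P_{n-k+1}(rz_0)\Vert_\infty=r^{n-k+1}\Vert P_{n-k+1}(z_0)\Vert_\infty$, each inner term is at most $r^n(1-x^2)$, and the inner sum has $n$ terms. Hence
\[
\mathcal{F}_F(r)\le \frac{x}{1-r}+(1-x^2)\sum_{n=1}^\infty n r^n=\frac{x}{1-r}+(1-x^2)\frac{r}{(1-r)^2}=:G(x,r).
\]

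Next we maximize in $x$. Since $\pa^2_x G=-2r/(1-r)^2\le0$, the derivative $\pa_x G$ is decreasing in $x$. Its limit as $x\to1^-$ is
\[
\frac{1}{1-r}-\frac{2r}{(1-r)^2}=\frac{1-3r}{(1-r)^2},
\]
which is nonnegative exactly when $r\le 1/3$. So for $r\le1/3$, $G$ is increasing in $x$ and $G(x,r)\le \lim_{x\to1^-}G(x,r)=1/(1-r)$. This gives (\ref{f1}) for $z=rz_0$, and since $z_0\in\pa B_X$ was arbitrary, for all $\Vert z\Vert=r\le1/3$.

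For sharpness we use the same extremal mapping as in Theorem \ref{Th1}: $F(z)=f(l_{z_0}(z))u$ with $|u_j|=1$ and $f(t)=(\lambda-t)/(1-\lambda t)$. Here $a=\lambda u$ and $\Vert P_s(rz_0)\Vert_\infty=\lambda^{s-1}(1-\lambda^2)r^s$, so
\[
\mathcal{F}_F(r)=\frac{\lambda}{1-r}+(1-\lambda^2)\sum_{n\ge1}r^n\frac{1-\lambda^n}{1-\lambda}=\frac{\lambda}{1-r}+(1+\lambda)\Bigl(\frac{r}{1-r}-\frac{\lambda r}{1-\lambda r}\Bigr).
\]
Expanding in $1-\lambda$, the first-order coefficient is exactly $-\pa_xG(1,r)$. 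This gives
\[
\mathcal{F}_F(r)=\frac{1}{1-r}+(1-\lambda)\frac{3r-1}{(1-r)^2}+O\bigl((1-\lambda)^2\bigr),
\]
which exceeds $1/(1-r)$ for $r>1/3$ once $\lambda$ is close to $1$.

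The main point to check carefully is this expansion, namely that the $O((1-\lambda)^2)$ remainder is uniform for fixed $r$. Writing the closed form above and differentiating in $\lambda$ at $\lambda=1$ makes this routine.
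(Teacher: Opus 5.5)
Your proof is correct and follows essentially the same route as the paper: the same slice functions $f_j(t)=F_j(tz_0)$ with Lemma~\ref{lem}, the same majorant $G(x,r)=\frac{x}{1-r}+(1-x^2)\frac{r}{(1-r)^2}$ maximized via concavity in $x$, and the same extremal map $f(l_{z_0}(z))u$. Your first-order coefficient $(3r-1)/(1-r)^2$ in the sharpness step is the right one (the paper's split leaves a remainder $G_5$ that is still of order $1-\lambda$, so its remark that $G_5\to0$ does not settle the sign by itself), and the exact identity $\mathcal{F}_F(r)-\frac{1}{1-r}=\frac{(1-\lambda)(r+2\lambda r-1)}{(1-r)(1-\lambda r)}$ gives sharpness directly for $r>1/3$ and $\lambda>\frac{1-r}{2r}$, with no asymptotics needed.
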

\begin{proof}
Let $z_0\in\pa B_X$ be fixed. Since $F=(F_1,F_2,\cdots,F_n)$, we consider $f_j(t)=F_j(t z_0)$ for $t\in \mathbb{D}$. Then, $f_j\in H(\mathbb{D}, \ol {\mathbb{D}})$ and 
\beas f_j(t)=a_j+\sum_{s=1}^\infty (P_{s})_j(z_0)t^s,\quad t\in\mathbb{D}.\eeas
Using similar argument as in the proof of \textrm{Theorem \ref{Th1}}, we have
\bea\label{f2}  \Vert P_s(z_0)\Vert_{\infty}\leq 1-\Vert a\Vert^2_{\infty}\quad \text{for}\quad s\geq 1.\eea
Let $x=\Vert a\Vert_{\infty}\in[0,1]$. It is evident that, if $x=1$, then $P_s(z_0)=0$ for all $s\geq 1$. Therefore, the inequality (\ref{f1}) holds for $z=rz_0$ and $r\in[0,1)$. Thus, 
we may assume that $x\in[0,1)$. Using (\ref{f2}), for $r\in(0,1)$, we have
\beas 
&&\sum_{n=0}^\infty x\Vert rz_0\Vert^n+\sum_{n=1}^\infty\left(\sum_{k=1}^n  \Vert rz_0\Vert^{k-1} \Vert P_{n-k+1}(rz_0)\Vert_\infty\right)\\[2mm]
&\leq&x\sum_{n=0}^\infty r^n+\sum_{n=1}^\infty\left((1-x^2) \sum_{k=1}^n r^n\right)\\
&=&\frac{x}{1-r}+(1-x^2)\sum_{n=1}^\infty nr^n\\
&=& \frac{x}{1-r}+(1-x^2)\frac{r}{(1-r)^2}:=G_4(x,r).\eeas
By partially differentiating $G_4(x,r)$ twice with respect to $x$, we obtain
\beas\frac{\pa}{\pa x}G_4(x,r)=\frac{1}{1-r}-2x\frac{r}{(1-r)^2}\quad\text{and}\quad \frac{\pa^2}{\pa x^2}G_4(x,r)=-2\frac{r}{(1-r)^2}\leq 0,\eeas
which shows that $\frac{\pa}{\pa x}G(x,r)$ is a monotonically decreasing function of $x\in[0, 1)$ and it follows that 
\beas \frac{\pa}{\pa x}G_4(x,r)\geq \lim_{x\to1^-}\frac{\pa}{\pa x}G_4(x,r)=\frac{1-3r}{(1-r)^2}\geq 0\eeas
for $r\leq 1/3$.
Thus, $G_4(x,r)$ is a monotonically increasing function of $x\in[0, 1)$ for $r\leq 1/3$ and it follows that 
\beas G_4(x,r)\leq \lim_{x\to 1^-}G_4(x,r)=\frac{1}{1-r}\quad\text{for}\quad r\leq \frac{1}{3}.\eeas
\indent To prove the sharpness of the constant $1/3$, we consider the function
\beas F_3(z)=f(l_{z_0}(z))u,\quad z\in B_X,\eeas
where $z_0\in \pa B_X$, $l_{z_0}\in T(z_0)$, $u=(u_1,u_2,\dots,u_n)\in \pa \mathbb{D}^n$ with $|u_1|=|u_2|=\dots =|u_n|=1$, and 
\beas f(t)= \frac{\lambda -t}{1-\lambda t}, \quad t \in \mathbb{D},~\lambda \in (0,1).\eeas
Thus, we have
\beas F_3(rz_0)=f(l_{z_0}(rz_0))u = a+\sum_{s=1}^\infty P_s(rz_0),\eeas
where $a=\lambda u$ and $P_s(z_0)=\lambda^{s-1}(\lambda^2 -1)u$ for $s\geq 1$. Therefore,
\bea &&\sum_{n=0}^\infty \Vert \lambda u\Vert_\infty \Vert rz_0\Vert^n+\sum_{n=1}^\infty \left(\sum_{k=1}^n  \Vert rz_0\Vert^{k-1} \Vert P_{n-k+1}(rz_0)\Vert_\infty\right)\nonumber\\[2mm]
&=&\sum_{n=0}^\infty \lambda r^n +\sum_{n=1}^\infty\left(\sum_{k=1}^n r^n \lambda^{n-k}(1-\lambda^2)\right)\nonumber\\[1mm]
&=&\frac{\lambda}{1-r}+(1-\lambda^2)\sum_{n=1}^\infty r^n\left(\sum_{k=1}^n \lambda^{n-k}\right)\nonumber\\[1mm]
&=& \frac{\lambda}{1-r}+(1-\lambda^2)\sum_{n=1}^\infty r^n\cdot\frac{1-\lambda^n}{1-\lambda}\nonumber\\[1mm]
%&=& \frac{\lambda}{1-r}+(1+\lambda)\sum_{n=1}^\infty \left(r^n-(\lambda r)^n\right)\nonumber\\[2mm]
&=& \frac{\lambda}{1-r}+(1+\lambda)\left(\frac{r}{1-r}-\frac{\lambda r}{1-\lambda r}\right)\nonumber\\[1mm]
\label{f3}&=&\frac{1}{1-r}-(1-\lambda)\frac{1-3r}{1-r}+G_5(\lambda, r),\eea
where
\beas G_5(\lambda, r)&=& \frac{(1+\lambda)}{(1-r)}\left(r-\frac{\lambda r(1-r)}{1-\lambda r}\right)+(1-\lambda)\frac{1-3r}{1-r}-\frac{1-\lambda}{1-r}\\
&=& \frac{(1+\lambda)}{(1-r)}\left(\frac{r(1-\lambda)}{1-\lambda r}\right)-(1-\lambda)\frac{3r}{1-r}\\
&=& (1-\lambda)\frac{r}{1-r}\left(\frac{\lambda+3\lambda r-2}{1-\lambda r}\right)~\text{which tends to}~0~\text{as}~\lambda\to1^-.\eeas
Furthermore, for $r>1/3$, we have $(1-\lambda)(1-3r)/(1-r)<0$.
From (\ref{f3}), we conclude that $1/3$ cannot be improved. This completes the proof.
\end{proof}
Using similar argument to those used for establishing the majorant series corresponding to the Ces\'aro operator for a holomorphic mapping $F:B_X\to\ol{D}^n$ and in view of (\ref{g6}), we obtain
\bs\beas \int_{0}^1\frac{F_j(t z)}{(1-t z)^{\beta}} d t= \sum_{n=0}^\infty \frac{a_j\Gamma(n+\beta) }{\Gamma(n+2)\Gamma(\beta)} z^n+\sum_{n=1}^\infty \frac{1}{n+1}\left(\sum_{k=1}^n\frac{\Gamma(k-1+\beta)}{\Gamma(k)\Gamma(\beta)} z^{k-1} (P_{n-k+1})_j(z)\right),\eeas\es
where $F_j: B_X\to\ol{\mathbb{D}}$ is a holomorphic mapping such that $F_j(z)=a_j+\sum_{s=1}^\infty (P_{s})_j (z)$, $z\in B_X$ and  $j\in\{1,2\ldots,n\}$.
Evidently, the majorant series corresponding to the $\beta$-Ces\'aro operator for a holomorphic mapping $F:B_X\to\ol{\mathbb{D}^n}$ is given by
\bs\beas \mathcal{C}_\beta^F(\Vert z\Vert):=\sum_{n=0}^\infty \frac{\Vert a\Vert_\infty\Gamma(n+\beta) }{\Gamma(n+2)\Gamma(\beta)}\Vert z\Vert^n+\sum_{n=1}^\infty \frac{1}{n+1}\left(\sum_{k=1}^n\frac{\Gamma(k-1+\beta)}{\Gamma(k)\Gamma(\beta)}\Vert z\Vert^{k-1}  \Vert P_{n-k+1}(z)\Vert_\infty\right).\eeas\es
In the following result, we establish the Bohr-type inequality for the $\beta$-Ces\'aro operator defined on a set of holomorphic mappings $F:B_X\to\ol{\mathbb{D}^n}$.
\begin{theo}\label{Th4}
Let $B_X$ be the unit ball of a complex Banach space $X$ and $0<\beta(\not=1)$. Let $a=(a_1, a_2, \ldots, a_n)\in\ol{\mathbb{D}^n}$ and let $F: B_X \to\ol{\mathbb{D}^n}$ be a holomorphic mapping with 
\beas F(z)=a+\sum_{s=1}^\infty P_s(z),\quad z\in B_X,\eeas
where $P_s(z)=(1/s!) D^s F(0)(z^s)$. Assume that $|a_j|=\Vert a\Vert_\infty$ for all $j\in\{1, 2, \ldots,n\}$. Then, we have 
\bea\label{e11}\mathcal{C}_\beta^F(\Vert z\Vert)\leq \frac{1}{r}\left(\frac{1-(1-r)^{1-\beta}}{1-\beta}\right)\quad \text{for}\quad \Vert z\Vert=r\leq R_\beta,\eea
 where $R_\beta\in(0, 1)$ is the positive root of the equation 
\beas \frac{3\left(1-(1-r)^{1-\beta}\right)}{1-\beta}-\frac{2\left((1-r)^{-\beta}-1\right)}{\beta}=0.\eeas
The number $R_\beta$ cannot be improved.
 \end{theo}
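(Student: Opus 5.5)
Fix $z_0\in\pa B_X$ and put $f_j(t)=F_j(tz_0)$ for $t\in\mathbb{D}$, exactly as in the proof of Theorem \ref{Th1}. Then $f_j\in H(\mathbb{D},\ol{\mathbb{D}})$, and Lemma \ref{lem} combined with the hypothesis $|a_j|=\Vert a\Vert_\infty$ gives $\Vert P_s(z_0)\Vert_\infty\le 1-x^2$ for $s\ge1$, where $x=\Vert a\Vert_\infty$. If $x=1$, every $P_s(z_0)$ vanishes and the claim is immediate, so we may assume $x\in[0,1)$. Write $c_k=\Gamma(k+\beta)/(\Gamma(k+1)\Gamma(\beta))$. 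Inserting the coefficient bound into $\mathcal{C}^F_\beta(\Vert rz_0\Vert)$ gives
\beas \mathcal{C}^F_\beta(r)\le x\sum_{n=0}^\infty \frac{c_n}{n+1}r^n+(1-x^2)\sum_{n=1}^\infty\frac{r^n}{n+1}\sum_{k=1}^{n}c_{k-1}=:G_6(x,r).\eeas
The first series equals $A(r):=\frac{1}{r}\cdot\frac{1-(1-r)^{1-\beta}}{1-\beta}$ by (\ref{ceq}). The inner sum in the second series is $\sum_{k=0}^{n-1}c_k$, which by (\ref{ceq3}) equals $\Gamma(n+\beta)/(\Gamma(n)\Gamma(\beta+1))$. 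By (\ref{ceq1}) the second series is therefore $B(r):=\frac1r\int_0^r t(1-t)^{-\beta-1}\,dt$. Since $t(1-t)^{-\beta-1}=(1-t)^{-\beta-1}-(1-t)^{-\beta}$, this evaluates to
\beas B(r)=\frac1r\left(\frac{(1-r)^{-\beta}-1}{\beta}-\frac{1-(1-r)^{1-\beta}}{1-\beta}\right).\eeas

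Next, $G_6(x,r)=xA(r)+(1-x^2)B(r)$ is concave in $x$. Hence $\pa_xG_6\ge \lim_{x\to1^-}\pa_x G_6=A(r)-2B(r)$. We have
\beas r\,(A-2B)=\frac{3(1-(1-r)^{1-\beta})}{1-\beta}-\frac{2((1-r)^{-\beta}-1)}{\beta}=:G_7(r).\eeas
So for $r\le R_\beta$ the function $G_6$ is increasing in $x$, which gives $G_6(x,r)\le A(r)$ and hence (\ref{e11}). The step needing care is showing that $G_7$ has a unique root $R_\beta$ in $(0,1)$ and that $G_7\ge0$ exactly on $(0,R_\beta]$. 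I would argue this through its power series. Writing $G_7(r)=\sum_{n\ge1} r^n\,(3c_{n-1}/n-2c_n)$ and simplifying, we get
\beas G_7(r)=\sum_{n\ge1}\frac{c_{n-1}}{n}\bigl(3-2(n-1+\beta)\bigr)r^n.\eeas
The coefficients are positive for small $n$ and negative from some index on, so by Descartes' rule of signs for power series $G_7$ has at most one positive root. Moreover $G_7(r)>0$ for small $r$, since its leading coefficient is $3-2\beta$. This needs $\beta<3/2$; otherwise I would check the sign pattern directly. As $r\to1^-$, $G_7\to-\infty$ (for $\beta\ge1$ because of the $(1-r)^{-\beta}$ term; for $\beta<1$ the limit is $3/(1-\beta)-\infty$ as well). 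I expect this sign analysis to be the main obstacle.

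For sharpness, I would take $F(z)=f(l_{z_0}(z))u$ with $f(t)=(\lambda-t)/(1-\lambda t)$ and $|u_j|=1$, as in Theorem \ref{Th1}. Then $\Vert a\Vert_\infty=\lambda$ and $\Vert P_s(rz_0)\Vert_\infty=(1-\lambda^2)\lambda^{s-1}r^s$, so
\beas \mathcal{C}^F_\beta(r)=\lambda A(r)+(1-\lambda^2)\sum_{n\ge1}\frac{r^n}{n+1}\sum_{k=1}^n c_{k-1}\lambda^{n-k}.\eeas
Expanding in $1-\lambda$, the second term is $(1-\lambda^2)B(r)+O((1-\lambda)^2)$, using $\lambda^{n-k}=1+O(1-\lambda)$ uniformly for fixed $r<1$ by dominated convergence. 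This gives
\beas \mathcal{C}^F_\beta(r)=A(r)+(1-\lambda)\bigl(2B(r)-A(r)\bigr)+O((1-\lambda)^2).\eeas
For $r>R_\beta$ we have $2B-A>0$, so the right-hand side exceeds $A(r)$ when $\lambda$ is close to $1$, and $R_\beta$ cannot be improved.
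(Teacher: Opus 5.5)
Your argument follows the paper's proof step for step: the coefficient bound $\Vert P_s(z_0)\Vert_\infty\le 1-x^2$ from Lemma \ref{lem}, and the majorant $G_6(x,r)=xA(r)+(1-x^2)B(r)$ obtained via (\ref{ceq}) and (\ref{ceq1}). It then uses concavity in $x$ to reduce everything to the sign of $r(A-2B)$, and the extremal map $f(l_{z_0}(z))u$ with a first-order expansion in $1-\lambda$. The one step you add is a proof that your $G_7$ has a unique root with the right sign pattern, which the paper only asserts. That step contains an algebraic slip. Since $\frac{(1-r)^{-\beta}-1}{\beta}=\sum_{n\ge1}\frac{c_n}{\beta}r^n$, the second term contributes $2c_n/\beta$ to the coefficient of $r^n$, not $2c_n$. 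The correct expansion is
\[ G_7(r)=\sum_{n\ge1}\frac{c_{n-1}}{n}\Bigl(3-\frac{2(n-1+\beta)}{\beta}\Bigr)r^n=\sum_{n\ge1}\frac{c_{n-1}}{n}\Bigl(1-\frac{2(n-1)}{\beta}\Bigr)r^n . \]
Its leading coefficient is $1$ for every $\beta>0$. Its coefficients are positive for $n<1+\beta/2$ and nonpositive afterwards, so there is exactly one sign change, and your Descartes-type argument goes through with no restriction $\beta<3/2$. Your formula, by contrast, would make all coefficients $\le 0$ when $\beta\ge 3/2$. That would mean $G_7<0$ on $(0,1)$ and no $R_\beta$ exists, so ``checking the sign pattern directly'' would not have rescued it.

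A shorter route avoids series altogether. From $\frac{1-(1-r)^{1-\beta}}{1-\beta}=\int_0^r(1-t)^{-\beta}\,dt$ and $\frac{(1-r)^{-\beta}-1}{\beta}=\int_0^r(1-t)^{-\beta-1}\,dt$ you get $G_7(r)=\int_0^r(1-3t)(1-t)^{-\beta-1}\,dt$. Hence $G_7(0)=0$, $G_7$ increases on $(0,1/3)$ and decreases on $(1/3,1)$. It tends to $-\infty$ as $r\to1^-$, because $(1-t)^{-\beta-1}$ is not integrable at $1$ for $\beta>0$. So $G_7$ has a unique zero $R_\beta\in(1/3,1)$, is positive before it and negative after it. This is exactly what both the inequality for $r\le R_\beta$ and the sharpness for $r>R_\beta$ require. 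With this correction your proof is complete.
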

\begin{proof}
Let $z_0\in\pa B_X$ be fixed. Since $F=(F_1,F_2,\cdots,F_n)$, we consider the function $f_j(t)=F_j(t z_0)$ for $t\in \mathbb{D}$. Then, $f_j\in H(\mathbb{D}, \ol {\mathbb{D}})$ and 
\beas f_j(t)=a_j+\sum_{s=1}^\infty (P_{s})_j(z_0)t^s,\quad t\in\mathbb{D}.\eeas
Using similar argument as in the proof of \textrm{Theorem \ref{Th1}}, we have
\bea\label{g1}  \Vert P_s(z_0)\Vert_{\infty}\leq 1-\Vert a\Vert^2_{\infty}\quad \text{for}\quad s\geq 1.\eea
Let $x=\Vert a\Vert_{\infty}\in[0,1]$. Evidently, if $x=1$, then $P_s(z_0)=0$ for all $s\geq 1$. Therefore, the inequality (\ref{e11}) holds for $z=rz_0$ and $r\in[0,1)$. Let $x\in[0,1)$. Using (\ref{ceq}), (\ref{ceq1}) and (\ref{g1}), for $r\in(0,1)$, we have
\beas &&x\sum_{n=0}^\infty \frac{1}{n+1} \frac{\Gamma(n+\beta) }{\Gamma(n+1)\Gamma(\beta)}\Vert rz_0\Vert^n\\
&&+\sum_{n=1}^\infty \frac{1}{n+1}\left(\sum_{k=1}^n\frac{\Gamma(k-1+\beta)}{\Gamma(k)\Gamma(\beta)}\Vert rz_0\Vert^{k-1}  \Vert P_{n-k+1}(rz_0)\Vert_\infty\right)\\[1mm]
&\leq&
%x\sum_{n=0}^\infty \frac{1}{n+1}\frac{\Gamma(n+\beta) }{\Gamma(n+1)\Gamma(\beta)}r^n+\sum_{n=1}^\infty \frac{1}{n+1}\left((1-x^2)\sum_{k=1}^n\frac{\Gamma(k-1+\beta)}{\Gamma(k)\Gamma(\beta)}r^n\right)\\[2mm]
x\sum_{n=0}^\infty\frac{1}{n+1} \frac{\Gamma(n+\beta) }{\Gamma(n+1)\Gamma(\beta)}r^n+(1-x^2)\sum_{n=1}^\infty \frac{r^n}{n+1}\left(\sum_{k=1}^n\frac{\Gamma(k-1+\beta)}{\Gamma(k)\Gamma(\beta)}\right)\\[1mm]
&=&x\sum_{n=0}^\infty \frac{1}{n+1}\frac{\Gamma(n+\beta) }{\Gamma(n+1)\Gamma(\beta)}r^n+(1-x^2)\sum_{n=1}^\infty \frac{r^n}{n+1}\left(\sum_{k=1}^n\frac{\Gamma(n-k+\beta)}{\Gamma(n-k+1)\Gamma(\beta)}\right)\\
&=&\frac{x}{r}\int_{0}^r\frac{1}{(1-t)^{\beta}} d t+\frac{1-x^2}{r}\int_{0}^r\frac{t}{(1-t)^{\beta+1}} d t\\
&=&\frac{x^2+x-1}{r}\int_{0}^r\frac{1}{(1-t)^{\beta}} d t+\frac{1-x^2}{r}\int_{0}^r\frac{1}{(1-t)^{\beta+1}} d t\eeas
\beas&=&\frac{1}{r}\left(\frac{(x^2+x-1)\left(1-(1-r)^{1-\beta}\right)}{1-\beta}+\frac{(1-x^2)\left((1-r)^{-\beta}-1\right)}{\beta}\right
):=G_6(x,r).\eeas
By partially differentiating $G_6(x,r)$ twice with respect to $x$, we obtain
\beas &&\frac{\pa}{\pa x}G_6(x,r)=\frac{1}{r}\left(\frac{(2x+1)\left(1-(1-r)^{1-\beta}\right)}{1-\beta}-\frac{2x\left((1-r)^{-\beta}-1\right)}{\beta}\right
)\\[1mm]\text{and}
&&\frac{\pa^2}{\pa x^2}G_6(x,r)=\frac{1}{r}\left(\frac{2\left(1-(1-r)^{1-\beta}\right)}{1-\beta}-\frac{2\left((1-r)^{-\beta}-1\right)}{\beta}\right
)\leq 0,\eeas
which shows that $\frac{\pa}{\pa x}G_6(x,r)$ is a monotonically decreasing function of $x\in[0, 1)$ and it follows that 
\beas \frac{\pa}{\pa x}G_6(x,r)\geq \lim_{x\to1^-}\frac{\pa}{\pa x}G(x,r)=\frac{1}{r}\left(\frac{3\left(1-(1-r)^{1-\beta}\right)}{1-\beta}-\frac{2\left((1-r)^{-\beta}-1\right)}{\beta}\right)\geq 0\eeas
for $r\leq R_\beta$, where $R_\beta\in (0, 1)$ is the positive root of the equation 
\beas \frac{3\left(1-(1-r)^{1-\beta}\right)}{1-\beta}-\frac{2\left((1-r)^{-\beta}-1\right)}{\beta}=0.\eeas
Thus, $G_6(x,r)$ is a monotonically increasing function of $x\in[0, 1)$ for $r\leq R_\beta$ and it follows that
\beas G_6(x,r)\leq \lim_{x\to 1^-}G_6(x,r)=\frac{1}{r}\left(\frac{1-(1-r)^{1-\beta}}{1-\beta}\right)\quad\text{for}\quad r\leq R_\beta.\eeas
\indent To prove the sharpness of the constant $R_\beta$, we consider the function
\beas F_4(z)=f(l_{z_0}(z))u,\quad z\in B_X,\eeas
where $z_0\in \pa B_X$, $l_{z_0}\in T(z_0)$, $u=(u_1,u_2,\dots,u_n)\in \pa \mathbb{D}^n$ with $|u_1|=|u_2|=\dots =|u_n|=1$, and 
\beas f(t)= \frac{\lambda -t}{1-\lambda t}, \quad t \in \mathbb{D},~\lambda \in (0,1).\eeas
Then, we have $F_4(rz_0)=f(l_{z_0}(rz_0))u = a+\sum_{s=1}^\infty P_s(rz_0)$,
where $a=\lambda u$ and $P_s(z_0)=\lambda^{s-1}(\lambda^2 -1)u$ for $s\geq 1$. 
Using (\ref{ceq}) and (\ref{ceq1}), we have
\bea &&\Vert \lambda u\Vert_\infty\sum_{n=0}^\infty \frac{1}{n+1}\frac{\Gamma(n+\beta) }{\Gamma(n+1)\Gamma(\beta)}\Vert rz_0\Vert^n\nonumber\hspace{3cm}\\[1mm]
&&+\sum_{n=1}^\infty \frac{1}{n+1}\left(\sum_{k=1}^n\frac{\Gamma(k-1+\beta)}{\Gamma(k)\Gamma(\beta)}\Vert rz_0\Vert^{k-1}  \Vert P_{n-k+1}(rz_0)\Vert_\infty\right)\nonumber\\[1mm]
&=&\lambda \sum_{n=0}^\infty \frac{1}{n+1}\frac{\Gamma(n+\beta) }{\Gamma(n+1)\Gamma(\beta)}r^n+\sum_{n=1}^\infty \frac{1}{n+1}\left(\sum_{k=1}^n\frac{\Gamma(k-1+\beta)}{\Gamma(k)\Gamma(\beta)}r^n \lambda^{n-k}(1-\lambda^2)\right)\nonumber\\
&=&\lambda \sum_{n=0}^\infty \frac{1}{n+1}\frac{\Gamma(n+\beta) }{\Gamma(n+1)\Gamma(\beta)}r^n+(1-\lambda^2)\sum_{n=1}^\infty \frac{r^n}{n+1}\left(\sum_{k=1}^n\frac{\Gamma(k-1+\beta)}{\Gamma(k)\Gamma(\beta)}\lambda^{n-k}\right)\nonumber\\
&=&\frac{\lambda}{r}\int_{0}^r\frac{1}{(1-t)^{\beta}} d t+(1-\lambda^2)\sum_{n=1}^\infty\frac{r^n}{n+1}\left(\sum_{k=1}^n\frac{\Gamma(n-k+\beta)}{\Gamma(n-k+1)\Gamma(\beta)}\lambda^{k-1}\right)\nonumber\eea
\bea&=&\frac{\lambda}{r}\left(\frac{1-(1-r)^{1-\beta}}{1-\beta}\right)+(1-\lambda^2)\sum_{n=1}^\infty\frac{r^n}{n+1}\left(\sum_{k=1}^n\frac{\Gamma(n-k+\beta)}{\Gamma(n-k+1)\Gamma(\beta)}\lambda^{k-1}\right)\nonumber\\[1mm]
&=&\frac{1}{r}\left(\frac{1-(1-r)^{1-\beta}}{1-\beta}\right)-\frac{1-\lambda}{r}\left(\frac{1-(1-r)^{1-\beta}}{1-\beta}\right)\nonumber\\[1mm]
&&+(1-\lambda^2)\sum_{n=1}^\infty\frac{r^n}{n+1}\left(\sum_{k=1}^n\frac{\Gamma(n-k+\beta)}{\Gamma(n-k+1)\Gamma(\beta)}\lambda^{k-1}\right)\nonumber\\[1mm]
\label{g3}&=&\frac{1}{r}\left(\frac{1-(1-r)^{1-\beta}}{1-\beta}\right)-\frac{1-\lambda}{r}\left(\frac{3\left(1-(1-r)^{1-\beta}\right)}{1-\beta}-\frac{2\left((1-r)^{-\beta}-1\right)}{\beta}\right)\nonumber\\&&+G_7(\lambda, r),\eea
where 
\beas G_7(\lambda, r)&=&\frac{1-\lambda}{r}\left(\frac{2\left(1-(1-r)^{1-\beta}\right)}{1-\beta}-\frac{2\left((1-r)^{-\beta}-1\right)}{\beta}\right)\hspace{4cm}\\[1mm]
&&+(1-\lambda^2)\sum_{n=1}^\infty\frac{r^n}{n+1}\left(\sum_{k=1}^n\frac{\Gamma(n-k+\beta)}{\Gamma(n-k+1)\Gamma(\beta)}\lambda^{k-1}\right)\\
&=&2(1-\lambda)\left(\frac{1}{r}\int_{0}^r\frac{1}{(1-t)^{\beta}} d t-\frac{1}{r}\int_{0}^r\frac{1}{(1-t)^{\beta+1}} d t\right)\\[1mm]
&&+(1-\lambda^2)\sum_{n=1}^\infty\frac{r^n}{n+1}\left(\sum_{k=1}^n\frac{\Gamma(n-k+\beta)}{\Gamma(n-k+1)\Gamma(\beta)}\lambda^{k-1}\right)\\[1mm]
&=&2(1-\lambda)\left(\sum_{n=0}^\infty \frac{1}{n+1}\frac{\Gamma(n+\beta) }{\Gamma(n+1)\Gamma(\beta)}r^n\right.\\&&\left.-\sum_{n=0}^\infty \frac{1}{n+1}\left(\sum_{k=0}^n\frac{\Gamma(n-k+\beta)}{\Gamma(n-k+1)\Gamma(\beta)}\right) r^n\right)\\[1mm]
&&+\frac{(1-\lambda^2)}{\lambda}\sum_{n=0}^\infty\frac{r^n}{n+1}\left(\sum_{k=0}^n\frac{\Gamma(n-k+\beta)}{\Gamma(n-k+1)\Gamma(\beta)}\lambda^{k}-\frac{\Gamma(n+\beta)}{\Gamma(n+1)\Gamma(\beta)}\right)\\[1mm]
&=&\sum_{n=0}^\infty \frac{1}{n+1}\left(-\frac{(1-\lambda)^2}{\lambda}\frac{\Gamma(n+\beta)}{\Gamma(n+1)\Gamma(\beta)}-2(1-\lambda)\sum_{k=0}^n\frac{\Gamma(n-k+\beta)}{\Gamma(n-k+1)\Gamma(\beta)}\right.\\[1mm]
&&\left.+\frac{(1-\lambda^2)}{\lambda}\sum_{k=0}^n\frac{\Gamma(n-k+\beta)}{\Gamma(n-k+1)\Gamma(\beta)}\lambda^{k}\right)r^n.\eeas
Using the identity (\ref{ceq3}) and letting $\lambda\to 1^-$, we have 
\bea
\label{g4}G_7(\lambda, r)= O\left((1-\lambda)^2\right).\eea
Furthermore, it is evident that for $r>R_\beta$, the following inequality holds
\bea\label{g5} \frac{3\left(1-(1-r)^{1-\beta}\right)}{1-\beta}-\frac{2\left((1-r)^{-\beta}-1\right)}{\beta}<0.\eea
From (\ref{g3}), (\ref{g4}) and (\ref{g5}), we conclude that $R_\beta$ cannot be improved. This completes the proof.
\end{proof}
\section*{ Declarations}
\noindent{\bf Authors' contributions:} All authors have equal contribution to complete the manuscript. All of them read and approved the final manuscript.\\
{\bf Conflict of Interest:} Authors declare that they have no conflict of interest.\\
{\bf Availability of data and materials:} Not applicable.\\
{\bf Acknowledgment:} Second author is supported by University Grants Commission (IN) fellowship (No. F. 44 - 1/2018 (SA - III)).

\end{document}